\newtheorem{prop}{Proposition}
\def\bbR{\mathrm{I\!R}}
\def\bbZ{\mathsf{Z\hskip-4.2ptZ}}
\def\bz{\mathcal{B}}
\def\dz{\mathcal{D}}
\def\lz{\mathcal{L}}
\def\vz{\mathcal{V}}
\def\wz{\mathcal{W}}
\def\zz{\mathcal{Z}}
\def\hjt{\hs\widehat{\nnh\jt\nnh}\hs}
\def\hni{\,\widehat{\hskip-2pt N\nh}\hs}
\def\tni{\,\widetilde{\hskip-2pt N\nh}\hs}
\def\hyp{\hskip.5pt\vbox
{\hbox{\vrule width2.5ptheight0.5ptdepth0pt}\vskip2pt}\hskip.5pt}
\def\hs{\hskip.7pt}
\def\hh{\hskip.4pt}
\def\nh{\hskip-.7pt}
\def\nnh{\hskip-1.5pt}
\def\hn{\hskip-.4pt}
\def\w{^{\phantom i}}
\def\txm{{T\hskip-2.9pt_x\w\hn M}}
\def\tym{{T\hskip-2.9pt_y\w\hn M}}
\def\vg{\varGamma}
\def\jt{\Theta}
\def\vs{\varSigma}
\def\bbC{{\mathchoice {\setbox0=\hbox{$\displaystyle\mathrm{C}$}
\hbox{\hbox to0pt{\kern0.4\wd0\vrule height0.9\ht0\hss}\box0}} 
{\setbox0=\hbox{$\textstyle\mathrm{C}$}\hbox{\hbox 
to0pt{\kern0.4\wd0\vrule height0.9\ht0\hss}\box0}} 
{\setbox0=\hbox{$\scriptstyle\mathrm{C}$}\hbox{\hbox 
to0pt{\kern0.4\wd0\vrule height0.9\ht0\hss}\box0}} 
{\setbox0=\hbox{$\scriptscriptstyle\mathrm{C}$}\hbox{\hbox 
to0pt{\kern0.4\wd0\vrule height0.9\ht0\hss}\box0}}}}
\def\Lie{\pounds}
\def\lj{\langle}
\def\rg{\rangle}
\def\lr{\lj\cdot\hs,\nh\cdot\rg}
\newtheorem{theorem}{Theorem}[section]
\newtheorem{lemma}[theorem]{Lemma}
\theoremstyle{definition}
\theoremstyle{remark}
\newtheorem{remark}[theorem]{Remark}
\numberwithin{equation}{section}
\begin{document}

\title[]{Nijen\-huis geometry of parallel tensors}
\author[A. Derdzinski]{Andrzej Derdzinski}
\address[Andrzej Derdzinski]{Department of Mathematics\\
The Ohio State University\hskip-1pt\\
231 \hbox{W\hskip-1pt.} 18th Avenue\\
Columbus, OH 43210, USA}
\email{andrzej@math.ohio-state.edu}
\author[P. Piccione]{Paolo Piccione}
\address[Paolo Piccione]{Department of Mathematics\\
School of Sciences\\
Great Bay University\\
Dongguan, Guangdong 523000, China}
\address{{\it Permanent address\hh}: 
Departamento de Matem\'atica\\
Instituto de Matem\'atica e Esta\-t\'\i s\-ti\-ca\\
Uni\-ver\-si\-da\-de de S\~ao Paulo\\
Rua do Mat\~ao 1010, CEP 05508-900\\
S\~ao Paulo, SP, Brazil}
\email{paolo.piccione@usp.br}
\author[I.\ Terek]{Ivo Terek} 
\address[Ivo Terek]{Department of Mathematics\\
The Ohio State University\\
231 \hbox{W\hskip-1pt.} 18th Avenue\\
Columbus, OH 43210, USA} 
%\email{terekcouto.1@osu.edu} 
\address{{\it Current address\hh}:
Department of Mathematics and Statistics\\
Williams College, Wil\-li\-ams\-town, MA 01267, USA}
\email{it3@williams.edu}

\thanks{Research supported in part by a
FAPESP\nh-\hs OSU 2015 Regular Research Award (FAPESP grant: 2015/50265-6).
The third author was also supported by NSF DMS-2247747.}

\begin{abstract}A tensor -- meaning here a tensor field $\,\jt$
of any type $(p,q)$
on a manifold -- may be called in\-te\-gra\-ble if it is parallel relative
to some tor\-sion-free connection. We provide analytical and geometric 
characterizations of in\-te\-gra\-bi\-li\-ty for differential $q$-forms,
$q=0,1,2,n-1,n$ (in dimension $n$), vectors, bi\-vec\-tors, symmetric
$\,(2,0)\,$ and $\,(0,2)\,$
tensors, as well as com\-plex-di\-ag\-o\-nal\-iz\-able and 
nil\-po\-tent tensors of type $\,(1,1)$. In most cases,
in\-te\-gra\-bi\-li\-ty is equivalent to algebraic constancy of $\,\jt\,$
coupled with the vanishing of one or more suitably
defined Nijen\-huis-type tensors, 
depending on $\,\jt\,$ via a qua\-si\-lin\-e\-ar first-or\-der differential
operator. For $\,(p,q)=(1,1)$, they include the ordinary Nijen\-huis tensor.
\end{abstract}

\subjclass[2010]{Primary 53C15
\and
Secondary 53D17}

\keywords{In\-te\-gra\-ble tensor field, Nijen\-huis tensor}

\maketitle

\setcounter{section}{0}
\setcounter{theorem}{0}
\setcounter{prop}{3}
\renewcommand{\thetheorem}{\Alph{theorem}}
\renewcommand{\theprop}{\Alph{prop}}
\section{Introduction}%\label{in}
\setcounter{equation}{0}
We refer to a tensor field $\,\jt\,$ of any type on a manifold $\,M\,$ as 
{\it algebraically constant\/} when, for any $\,x,y\in M\nh$,
some linear iso\-mor\-phism $\,\txm\to\tym\,$ sends $\,\jt\nnh_x\w$ to
$\,\jt\nnh_y\w$. 
The algebraic constancy amounts to being constant for functions, to vanishing
nowhere or everywhere in the case of vector fields and $\,1$-forms, and to
having constant rank for symmetric or skew-sym\-me\-tric $\,(0,2)\,$ and
$\,(2,0)\,$ tensors.

We call a tensor field $\,\jt\,$ {\it in\-te\-gra\-ble\/}
if some tor\-sion-free connection makes it parallel, and {\it
locally constant\/} if it has constant components in suitable local
coordinates around each point. As one sees using a partition of unity, for
in\-te\-gra\-bi\-li\-ty of $\,\jt\,$ it suffices that such tor\-sion-free
connections exist locally. Consequently,
\begin{equation}\label{imp}
\mathrm{the\ local\ constancy\ of\ }\,\jt\,\mathrm{\ implies\ its\
in\-te\-gra\-bi\-li\-ty\ (but\ not\ conversely),}
\end{equation}
counterexamples to the converse being non\-flat 
pseu\-\hbox{do\hs-}Riem\-ann\-i\-an metrics.

Given an algebraically constant tensor $\,\jt\,$ on a manifold $\,M\,$ and a
distribution $\,\dz\subseteq T\nh M\,$ naturally associated with it, %$\,\jt$,
as $\,\dz\,$ is obviously $\,\nabla\nh$-par\-al\-lel when $\,\nabla\nh\jt=0$,
\begin{equation}\label{iii}
\mathrm{the\ in\-te\-gra\-bi\-li\-ty\ of\ }\,\jt\,\mathrm{\ implies\ the\ 
distribution}\hyp\mathrm{in\-te\-gra\-bi\-li\-ty\ of\ 
}\,\dz\hh.
\end{equation}
%$\dz\,$ being obviously $\,\nabla\nh$-par\-al\-lel whenever
%$\,\nabla\nh\jt=0$.
The local constancy of an algebraically constant tensor %$\,\jt\,$
is nothing else than in\-te\-gra\-bi\-li\-ty, in the sense of
\cite[Prop.\,1.1]{s-kobayashi}, of the corresponding $\,G$-struc\-ture
%$\,P\hs$ %arising from $\,\jt\,$ as in
(Remark~\ref{gstrc}).

With a $\,(1,1)\,$ tensor $\,\jt\,$ on a manifold $\,M\,$
%of dimension $\,n\ge2\,$
one associates its Nijen\-huis tensor $\,N\nh$, introduced by
Nijen\-huis \cite{nijenhuis} and studied by many others 
\cite{bolsinov-konyaev-matveev,boubel,clark-bruckheimer,goel,grifone-mehdi,
hernando-reyes-gadea,e-kobayashi,kurita,thompson}, which sends 
vector fields $\,v,w\,$ to the vector field
\begin{equation}\label{nih}
N(v,w)\,=\,\jt[\jt v,w]\,+\,\jt[v,\jt w]\,
-\,[\jt v,\jt w]\,-\,\jt^2\hn[v,w]\hh.
\end{equation}
As pointed out by several authors
\cite[Sect.\,2.3]{clark-bruckheimer},
\cite[Definition\,2.2]{bolsinov-konyaev-matveev},
$\,N\nnh=\hh0\,$ identically whenever $\,\jt\,$ is in\-te\-gra\-ble since, 
for any tor\-sion-free
connection $\,\nabla\hn$ on $\,M\nh$,
\begin{equation}\label{tfr}
N(v,w)\,=\,[\jt\nabla_{\nnh\!v}\w\nnh\jt\,
-\,\nabla_{\!\!\jt\hn v}\w\jt]\hh w\,+
\,[\nabla_{\!\!\jt\hn w}\w\jt\,
-\,\jt\nabla_{\nh\!w}\w\jt]\hh v\hh.
\end{equation}
Various generalizations of the Nijen\-huis tensor have been proposed 
\cite{blaga-nannicini,kosmann-schwarzbach,reyes-tempesta-tondo,studeny,
tempesta-tondo}. Below, after stating Proposition~\ref{crbiv}, we
elaborate on such generalizations that are of interest to us and have
therefore been introduced in this paper.

Com\-plex-di\-ag\-o\-nal\-iz\-abil\-i\-ty of a linear en\-do\-mor\-phism of
a fi\-nite-di\-men\-sion\-al real vector space $\,V\hh$ means, as usual, 
di\-ag\-o\-nal\-iz\-abil\-i\-ty of its com\-plex-lin\-e\-ar extension to
the complexification of $\,V\nh$. Since any en\-do\-mor\-phism of 
$\,V\hh$ is, uniquely, the sum of a com\-plex-di\-ag\-o\-nal\-iz\-able and
a nil\-po\-tent one \cite[Sect.\,4.2]{humphreys}, it is natural to deal with
these two classes of en\-do\-mor\-phisms separately.

In Sect.\,\ref{io},\,\ref{nc}--\ref{sc},\,\ref{df},\,\ref{ss},\,\ref{st}
and\,\ref{ik}--\ref{tc} we prove our six main results, stated
below. We begin with a fact due to Kurita
\cite[Theorem 9]{kurita}, which also easily follows (see
Sect.\,\ref{cd}) from a theorem of 
Bol\-si\-nov, Ko\-nya\-ev and Mat\-ve\-ev
\cite[Theorem 3.2]{bolsinov-konyaev-matveev}:
\begin{remark}\label{cdiag}For an algebraically constant
com\-plex-di\-ag\-o\-nal\-iz\-able\/ $\,(1,1)\,$ tensor\/ $\,\jt\,$ on a 
manifold\/ $\,M\,$ of dimension\/ $\,n\ge1$, the vanishing of\/ $\,N$ is
equivalent to the in\-te\-gra\-bi\-li\-ty of\/ $\,\jt$, as well as to its
local constancy.
\end{remark}
Algebraically constant tensors $\,\jt\,$ of type $\,(1,1)\,$ give rise to the
vector sub\-bundles $\,\zz^i\nh=\mathrm{Ker}\,\jt^i$ and 
$\,\bz\hh^i\nh=\mathrm{Im}\,\jt^i$ of $\,T\nh M\nh$, for integers $\,i\ge0$.
\begin{theorem}\label{npjor}Given an algebraically constant nil\-po\-tent\/ 
$\,(1,1)\,$ tensor\/ $\,\jt\,$ on a manifold\/ $\,M\,$ of dimension\/
$\,n\ge1$, the following four conditions are equivalent.
\begin{enumerate}
\item[(i)] $N\nnh=\hh0\,$ %in\/ {\rm(\ref{nij})} %the distributions\/ 
and\/ $\,\zz^i\nh=\mathrm{Ker}\,\jt^i$ is in\-te\-gra\-ble for every\/
$\,i=1,\dots,n$.
\item[(ii)] In some commuting local frame\/ $\,e_1\w,\dots,e_n\w$ around each
point, $\,\jt\,$ has the Jor\-dan normal form, with\/ $\,\jt e_1\w=0\,$ and
$\,\jt e\hn_i\w=0\,$ or\/ $\,\jt e\hn_i\w= e\hn_{i-1}\w$ if\/ $\,i>1$.
\item[(iii)] $\jt\,$ is locally constant.
\item[(iv)] $\jt\,$ is in\-te\-gra\-ble.
\end{enumerate}
\end{theorem}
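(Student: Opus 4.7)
The plan is to establish the cycle (iii) $\Rightarrow$ (iv) $\Rightarrow$ (i) $\Rightarrow$ (ii) $\Rightarrow$ (iii), three of whose links are straightforward; the fourth, (i) $\Rightarrow$ (ii), contains all the substance. The implication (iii) $\Rightarrow$ (iv) is (\ref{imp}), while (ii) $\Rightarrow$ (iii) holds because a commuting local frame is, by the Frobenius theorem, of the form $e_i = \partial/\partial x^i$ in suitable coordinates, and the Jordan relations then assert that $\jt$ has components equal to $0$ or $1$ (hence constant) in these coordinates. For (iv) $\Rightarrow$ (i): if $\nabla$ is torsion-free with $\nabla\jt = 0$, then $N = 0$ by (\ref{tfr}), and $\nabla\jt^i = 0$ makes each $\zz^i$ a $\nabla$-parallel distribution, which is involutive via $[v,w] = \nabla_v w - \nabla_w v$ for sections $v,w$ of $\zz^i$.

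The substantive implication (i) $\Rightarrow$ (ii) I would prove locally, by induction on the nilpotency index $k$ of $\jt$, which is constant on $M$ by algebraic constancy. The case $k=1$ (i.e., $\jt \equiv 0$) is trivial. For $k \geq 2$, the bundle $\bz^{k-1} = \jt^{k-1}(TM) \subseteq \zz^1$ has constant rank $r_k = \dim\zz^k - \dim\zz^{k-1}$ and consists of the bottom vectors of the longest Jordan chains. My plan: first, choose local vector fields $u_1, \ldots, u_{r_k}$ whose classes modulo $\zz^{k-1}$ form a frame of $TM/\zz^{k-1}$; second, form the would-be Jordan chains $u_\alpha, \jt u_\alpha, \ldots, \jt^{k-1} u_\alpha$; third, apply the inductive hypothesis to the restriction of $\jt$ within $\zz^{k-1}$ transverse to these chains, thereby completing the frame. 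By construction the resulting tentative frame satisfies $\jt e_i \in \{0, e_{i-1}\}$; the integrability of $\zz^{k-1}$ (which has lower nilpotency index under the induced action of $\jt$) ensures that the inductive step produces a Jordan-form frame at the lower level.

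The main obstacle is that this tentative frame need not commute; one must exploit the remaining gauge freedom -- changes of the leaders $u_\alpha$ by sections of $\zz^{k-1}$, and changes of the inductive lower-level frame -- to arrange $[e_i, e_j] = 0$ while preserving the Jordan form. The vanishing of $N$, rewritten via (\ref{tfr}) with an auxiliary torsion-free $\nabla$, converts into first-order identities relating $\jt$ to brackets; these supply the solvability conditions for the PDEs governing the gauge adjustments, while the integrability of each $\zz^i$ provides Frobenius-style local coordinates adapted to the flag at every step of the induction. The expected technical difficulty is verifying that killing the bracket obstructions at one filtration level does not revive them at higher levels, which is precisely the compatibility ensured by $N = 0$ together with the full integrability hypothesis on the $\zz^i$'s; once confirmed, the induction closes and the desired commuting Jordan frame exists in a neighborhood of each point.
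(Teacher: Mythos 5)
Your cycle of implications and your treatment of the three easy links are fine: (iii) $\Rightarrow$ (iv) is (\ref{imp}), (iv) $\Rightarrow$ (i) follows from (\ref{tfr}) and the $\nabla$-par\-al\-lel\-ism of each $\,\zz^i\nh$, and (ii) $\Rightarrow$ (iii) is immediate from the Frobenius description of a commuting frame. The problem is that your argument for (i) $\Rightarrow$ (ii) stops exactly where the theorem actually lives. You construct a tentative Jordan frame and then write that one must ``exploit the remaining gauge freedom'' to make it commute, that $N=0$ ``supplies the solvability conditions for the PDEs governing the gauge adjustments,'' and that ``once confirmed, the induction closes.'' No such PDE system is written down, no solvability is verified, and no mechanism is given for why the bracket obstructions at one filtration level can be killed without reviving those at another. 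That is not a technical loose end: producing the commuting frame from $N=0$ plus integrability of the $\,\zz^i$ \emph{is} the theorem, and asserting that the hypotheses ``are precisely the compatibility'' needed is circular. There is also a structural defect in your induction on the nilpotency index $k$: the ``restriction of $\jt$ within $\zz^{k-1}$ transverse to the chains'' is not an instance of the theorem's hypotheses. It requires a smooth $\jt$-in\-var\-i\-ant complement to the span of the longest chains (non-canonical, and not shown to exist as a subbundle compatible with the construction), and even granting one, the restricted endomorphism is not a $(1,1)$ tensor on a manifold for which $N=0$ and integrability of the kernels have been established; none of this is addressed.

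For contrast, the paper inducts on the \emph{dimension}, passing to the quotient $\hjt$ on a local leaf space $\vs$ of $\,\zz=\mathrm{Ker}\,\jt$; Lemma~\ref{indst} shows this quotient is canonical and inherits all hypotheses, so each Jordan block simply shortens by one. The commutation is then obtained by three concrete steps you would need analogues of: (1) the nonfinal vectors of the lifted $\jt$-or\-bits are \emph{uniquely determined} (being $\jt$-images of lifts, independent of the choice of lift) and commute pairwise by Lemma~\ref{indst}(b), which is where $N=0$ in (\ref{nih}) is actually used; (2) these commuting fields generate a free local $\,\bbR\hn^k$-ac\-tion; (3) the final vectors and the complementary vertical fields are prescribed on a transversal $\,Q\,$ and spread over a neighborhood by that action, which forces them to commute with the nonfinal vectors and with one another. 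Until you either reproduce such a mechanism or exhibit and solve your gauge-fixing system explicitly, the implication (i) $\Rightarrow$ (ii) remains unproved.
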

The Jor\-dan normal form of an algebraically constant 
nil\-po\-tent $\,(1,1)\,$ tensor $\,\jt\,$ may be represented by
\begin{equation}\label{wdi}
\mathrm{a\ weakly\ decreasing\ string\ }\,\,d_1\w\nnh\ldots d_m\w\mathrm{\ of\
positive\ integers,}
\end{equation}
each $\,d_q\w$ standing for a $\,d_q\w\nnh\times d_q\w$ Jor\-dan block matrix
with ones immediately above the diagonal and zeros everywhere else. Of
interest to
us are the Jor\-dan normal forms $\,d_1\w\nnh\ldots d_m\w$ such that
$\,d_1\w=\ldots=d_{m-1}\w$. In other words,
\begin{equation}\label{csd}
\begin{array}{l}
\mathrm{either\nh\ all\nh\ blocks\nh\ have\nh\ the\nh\ same\nh\ length,\nnh\
or\nh\ they\nh\ represent\
exactly}\\
\mathrm{two\ different\ lengths,\ with\ the\ shorter\ one\ occurring\ just\
once.}
\end{array}
\end{equation}
We say that a given algebraic type of an algebraically constant
nil\-po\-tent $\,(1,1)$ tensor $\,\jt\,$ is {\it controlled by the
Nijen\-huis tensor\/} if the vanishing of $\,N$ implies, on any
underlying manifold, the local constancy of $\,\jt$.
\begin{theorem}\label{cntrl}Condition\/ {\rm(\ref{csd})} imposed on the
Jor\-dan normal form of an algebraically constant nil\-po\-tent\/ $\,(1,1)\,$ 
tensor\/ $\,\jt\,$ on a manifold\/ $\,M\,$ of dimension\/ $\,n\ge1\,$ is
necessary and sufficient for the algebraic type of\/ $\,\jt\hh$ to
be controlled by its Nijen\-huis tensor.
\end{theorem}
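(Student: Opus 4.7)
The plan is to invoke Theorem~\ref{npjor}, by which an algebraically constant nil\-po\-tent $\,\jt\,$ is locally constant if and only if $\,N=0\,$ and every kernel distribution $\,\zz^i=\mathrm{Ker}\,\jt^i\,$ is integrable. The algebraic type of $\,\jt\,$ is therefore controlled by the Nijen\-huis tensor precisely when, for any such $\,\jt\,$ on any manifold, the vanishing of $\,N\,$ already forces each $\,\zz^i$ to be integrable. The proof splits into two implications: that every Jor\-dan type satisfying (\ref{csd}) has this property, and that every violating type admits a counterexample.

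For the sufficiency direction I would first establish the commutator identity obtained by iterating $\,N=0$: for any vector field $\,V$, any $\,l\ge 0$, and any $\,w\in\zz^k$,
\[
\jt^k[\jt^l V,\,w]\,=\,\jt^{k+l}[V,w],
\]
proved by induction on $\,k\,$ (base $\,k=1\,$ directly from $\,N=0$; inductive step using $\,N(\jt^{l-1}V,w)=0\,$ together with $\,\jt w\in\zz^{k-1}$). Writing $\,d_1\,$ for the largest block size, hence the nil\-po\-tency index of $\,\jt$, the choice $\,k+l=d_1\,$ and $\,\jt^{d_1}=0\,$ yield $\,[v,w]\in\zz^i\,$ whenever $\,v\in\bz^{d_1-i}=\mathrm{Im}\,\jt^{d_1-i}\,$ and $\,w\in\zz^i$; taking $\,w\in\bz^{d_1-i}\,$ as well recovers the standard fact that each $\,\bz^j\,$ is integrable. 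In the all-equal case ($\,d_1=\ldots=d_m=d\,$), a dimension count gives $\,\bz^{d-i}=\zz^i$, so integrability of $\,\bz^{d-i}\,$ at once delivers that of $\,\zz^i$.

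In the one-shorter-block case ($\,d_1=\ldots=d_{m-1}>d_m\,$) one has $\,\bz^{d_1-i}\subsetneq\zz^i$, with the complement carried by the single shorter Jor\-dan block. The commutator identity already handles bracket pairs in which at least one entry lies in $\,\bz^{d_1-i}$; it remains to control brackets of sections of a complement $\,S_i\,$ to $\,\bz^{d_1-i}\,$ inside $\,\zz^i$. I would proceed by induction on $\,i$, leveraging the integrability of $\,\zz^{i-1}\,$ (inductive hypothesis) and the uniqueness of the shorter Jor\-dan block to choose $\,S_i\,$ consistently and close the bracket computation. This step is the main obstacle: when $\,S_i\,$ has rank greater than one it is not canonically a sub-bundle of $\,TM$, so one must exploit that the shorter block's kernel filtration yields a canonical flag in the quotient $\,\zz^i/\bz^{d_1-i}\,$ and combine this with the commutator identity applied to lifts.

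For the necessity direction, whenever (\ref{csd}) fails the Jor\-dan type has at least two blocks of some non-maximal length (or three distinct lengths), which provides enough freedom to twist some $\,\zz^i\,$ into a non-integrable distribution while preserving algebraic constancy and $\,N=0$. A prototype for Jor\-dan type $(2,1,1)$ in dimension four: in coordinates $(x,y,u,v)$, set $\,\jt\partial_x=\partial_u$, $\,\jt\partial_v=y\partial_u$, and $\,\jt\partial_y=\jt\partial_u=0$. Direct computation gives $\,N\equiv 0\,$ and Jor\-dan type $(2,1,1)$, while $\,\zz^1=\mathrm{span}(\partial_y,\partial_u,\partial_v-y\partial_x)\,$ contains $\,\partial_y\,$ and $\,\partial_v-y\partial_x\,$ whose bracket $\,-\partial_x\,$ lies outside $\,\zz^1$, so $\,\jt\,$ is not locally constant. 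General counterexamples for violating Jor\-dan types follow the same idea: multiple shorter blocks, or three distinct block lengths, supply the room to install a non-integrable sub-distribution inside some $\,\zz^i\,$ while keeping $\,N=0$.
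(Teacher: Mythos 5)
Your overall strategy -- reducing to Theorem~\ref{npjor} and then showing that $\,N=0\,$ forces integrability of every $\,\zz^i\,$ exactly under (\ref{csd}) -- is the paper's strategy as well, and your commutator identity is essentially the paper's (\ref{ift})/(\ref{ziz}). The all-equal-blocks case and the explicit $\,211\,$ example (which is Kobayashi's, cited in the paper) are correct. However, both halves of your argument have genuine gaps. In the sufficiency direction, the one-shorter-block case is precisely where the work lies, and you leave it open: you yourself flag that the complement $\,S_i\,$ of $\,\bz^{d_1-i}\,$ in $\,\zz^i\,$ is not canonical and can have rank greater than one (its rank is $\,\min(i,d_1-d_m)$), and the proposed ``canonical flag in $\,\zz^i/\bz^{d_1-i}$'' is never constructed, so the bracket $\,[S_i,S_i]\,$ is not controlled. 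The paper sidesteps this entirely with a codimension-one trick: it shows $\,\zz^{i+1}\,$ contains $\,\zz^i+\bz^{d-i-1}\,$ with codimension \emph{one} (and, for $\,d_m\le i<d$, that $\,\zz^i\,$ contains $\,\zz^{d_m-1}+\bz^{d-i}\,$ with codimension one), verifies $\,[\zz^i+\bz^{d-i-1},\zz^{i+1}]\subseteq\zz^{i+1}\,$ from (\ref{ziz}\hh-b) and (\ref{ziz}\hh-d), and invokes Lemma~\ref{cdone}, for which a codimension-one complement is harmless because a single spanning section brackets trivially with itself. Without some such reduction your induction does not close.

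In the necessity direction, a single counterexample for the type $\,211\,$ does not prove necessity: you must produce, for \emph{every} Jordan type violating (\ref{csd}) -- equivalently, every type containing three blocks of lengths $\,p\le q<r\,$ as in (\ref{nbg}) -- an algebraically constant nilpotent $\,\jt\,$ with $\,N=0\,$ and some $\,\mathrm{Ker}\,\jt^i\,$ non\-in\-te\-gra\-ble. The assertion that general violating types ``follow the same idea'' is where the real content sits: the vanishing of $\,N\,$ imposes the nontrivial recursion (\ref{cij})/(\ref{eij}) among all the structure constants, and one must exhibit constants satisfying it while breaking integrability of $\,\mathrm{Ker}\,\jt\hh^p$. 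The paper's Proposition~\ref{ctrol} does this by a left-invariant construction on a step-$2$ nilpotent Lie group with the specific choice $\,E_{i,j}^p=\mathrm{max}\hh(0,\hs i+\mathrm{min}\hh(0,\hs j-p))\,$ supported on one triple of orbits, followed by an explicit verification of (\ref{eij}) and (\ref{bdr}) and of $\,E_{p,p}^p=p\ne0$. Your proposal contains no analogue of this verification, so the necessity half is only established in dimension four for one Jordan type.
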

Theorem~\ref{cntrl} would be true as stated even if our definition of being
controlled by $\,N\hs$ referred to in\-te\-gra\-bi\-li\-ty rather than
local constancy. Namely, Proposition~\ref{ctrol} -- our proof of the necessity
of (\ref{csd}) -- realizes any $\,\jt\,$ not satisfying (\ref{csd}) as a
left-in\-var\-i\-ant tensor with $\,N\nnh=\hh0\,$ on a step $\,2\,$
nil\-po\-tent Lie group, which
fails the in\-te\-gra\-bi\-li\-ty test (\ref{imp}) due to having
non\-in\-te\-gra\-ble $\,\mathrm{Ker}\,\jt\hh^p$ for some integer $\,p\ge1$.

For nil\-po\-tent $\,(1,1)\,$ tensors $\,\jt\,$ which are {\it generic},
that is, $\,\,\dim\hs\mathrm{Ker}\,\jt=1$ or, equivalently, the 
Jor\-dan normal form of $\,\jt\,$ is the one-term string $\,n\,$ (a single
Jor\-dan block), the sufficiency of (\ref{csd}) in Theorem~\ref{cntrl} is a
result of Ko\-ba\-ya\-shi \cite[Sect.\,3]{e-kobayashi}. See also
\cite[Cor.\,2.4]{grifone-mehdi}, \cite[Theorem\,1]{thompson}, 
\cite[Theorem\,1.3, Cor.\,1.5]{boubel},
\cite[Theorem\,4.6]{bolsinov-konyaev-matveev}. Ko\-ba\-ya\-shi
\cite[Sect.\,5]{e-kobayashi} further illustrated the necessity of (\ref{csd})
by an example, with $\,n=4\,$ and the Jor\-dan normal form $\,211$, cited in
\cite[Example\,2.1]{bolsinov-konyaev-matveev}.

Sect.\,\ref{ga} exhibits a special case of Theorem~\ref{cntrl} by means of
an af\-fine-bun\-dle construction, resulting in
%{\it generalized al\-most-tan\-gent} \hbox{{\it structures\hh}:}
nonzero algebraically constant nil\-po\-tent
$\,(1,1)\,$ tensors $\,\jt\,$ with $\,N\nnh=\hh0$, satisfying the condition
$\,\jt^2\nh=0\,$ (equivalent to 
$\,\mathrm{Im}\,\jt\subseteq\mathrm{Ker}\,\jt$, that is, 
to having the Jor\-dan normal
form $\,2\ldots2\,$ or $\,2\ldots21\ldots1$).

For the normal form $\,2\ldots2$, 
also characterized by the equality 
$\,\mathrm{Ker}\,\jt=\mathrm{Im}\,\jt$, corresponding to the {\it
al\-most-tan\-gent 
structures\/} \cite{yano-davies}, the assertion of 
Theorem~\ref{cntrl} is due to Goel \cite[Theorem\,2.4]{goel}, while our
af\-fine-bun\-dle construction becomes that of Crampin and Thompson
\cite{crampin-thompson}. Our construction is ``locally universal''
(Theorem~\ref{sqezr}), which generalizes the local version of
\cite[Theorem on p.\,69]{crampin-thompson}.

We justify the following observation in Sect.\,\ref{df}.
\begin{prop}\label{dffrm}The closedness of an algebraically constant 
differential\/ $\,q$-form on an\/ 
$\,n$-man\-i\-fold, $\,q=0,1,2,n-1,n$, implies its local constancy.
\end{prop}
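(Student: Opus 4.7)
The plan is to treat the five values of $q$ separately, reducing each to a classical local normal form theorem. The cases $q=0$ and $q=1$ are essentially immediate: for $q=0$, algebraic constancy of a function amounts to constancy; for $q=1$, algebraic constancy means nowhere vanishing, and the Poincar\'e lemma writes a closed $1$-form locally as $\omega=df$ with $df\ne 0$, so that $f$ may be completed to a coordinate system $(x^1=f,x^2,\dots,x^n)$ in which $\omega=dx^1$, visibly of constant components.

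For $q=2$ the statement is Darboux's theorem for closed $2$-forms of constant rank $2r$: there exist local coordinates in which $\omega=\sum_{i=1}^r dx^i\wedge dx^{r+i}$. Closedness is exactly what makes the kernel distribution integrable, and Moser's trick applied to the quotient by the resulting foliation yields the normal form. The case $q=n$ is likewise a standard local-model result: closedness is automatic, and algebraic constancy means $\omega$ is either identically zero or nowhere zero, the latter case being the existence of coordinates with $\omega=dx^1\wedge\cdots\wedge dx^n$, provable by induction on $n$ or via Moser's trick comparing $\omega$ with a fixed reference volume form.

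The case $q=n-1$ combines the preceding two tools. Algebraic constancy gives a nowhere-vanishing form, and a pointwise algebraic check (writing $\omega$ in a local cobasis) shows that the subbundle $\dz=\{v\in\tm:\iota_v\omega=0\}$ has rank one everywhere. Being of rank one, $\dz$ is automatically integrable, so by Frobenius there are local coordinates $(x^1,\dots,x^n)$ with $\partial/\partial x^1$ spanning $\dz$; relative to these, $\omega=f\,dx^2\wedge\cdots\wedge dx^n$ for a nowhere-zero smooth $f$. Closedness forces $\partial f/\partial x^1=0$, so $f$ depends only on $(x^2,\dots,x^n)$, and applying the $q=n$ normal form just established on the $(n-1)$-dimensional slice $\{x^1=\mathrm{const}\}$ replaces $(x^2,\dots,x^n)$ by new coordinates $(y^2,\dots,y^n)$ in which $f\,dx^2\wedge\cdots\wedge dx^n=dy^2\wedge\cdots\wedge dy^n$; in $(x^1,y^2,\dots,y^n)$ the form $\omega$ has constant components. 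The main obstacle is simply to invoke the correct classical normal-form theorem in each case; the only step requiring any argument beyond a direct citation is this $q=n-1$ case, where Frobenius serves as a preliminary reduction to the volume-form normal form on a transverse slice.
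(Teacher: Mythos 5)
Your proposal is correct and follows essentially the same route as the paper: a case-by-case reduction to the classical normal forms (Poincar\'e lemma for $q=1$, Darboux for $q=2$, the volume-form normal form for $q=n$), with the $q=n-1$ case handled by showing the rank-one kernel direction can be split off, that closedness kills the dependence of the coefficient on that direction, and then invoking the $q=n$ case on a transverse slice. The only cosmetic difference is that the paper realizes the kernel line via a vector field $v$ with $\zeta=\omega(v,\cdot,\dots,\cdot)$ for a volume form $\omega$ and proves the $q=n$ normal form by an explicit integration rather than citing Moser, but the logical structure is identical.
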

The converse implication (closedness from in\-te\-gra\-bi\-li\-ty,
and hence also from local constancy) is obviously true for forms of all
degrees.

Even weakened by the
replacement of local constancy with in\-te\-gra\-bi\-li\-ty -- cf.\
(\ref{imp}) --  Proposition~\ref{dffrm} fails to hold for differential forms
of other degrees: as we verify in 
Sect.\,\ref{do}, for any dimension $\,n\ge5\,$ and any
$\,q\in\{3,\dots,n-2\}$, in local coordinates $\,x^1\nh,\dots,x^n\nh$, the
following formula defines a
differential $\,q$-form $\,\zeta\hh$ which
is algebraically constant and closed, but not in\-te\-gra\-ble:
\begin{equation}\label{nin}
%\begin{array}{l}
\zeta=
(dx^1\nnh\wedge dx^2\nh+dx^3\nnh\wedge dx^4)\wedge(dx^5\nnh+x^1dx^2\nh
-x^3dx^4)\wedge dx^6\nnh\wedge\ldots\wedge dx\hh^{q+2}\nh.
%\end{array}
\end{equation}
Con\-stant-rank (skew)sym\-me\-tric $\,(0,2)\,$ and $\,(2,0)\,$ tensors,
being bundle mor\-phisms
$\,T\nh M\to T^*\hskip-1.8ptM\,$ or $\,T^*\hskip-1.8ptM\to T\nh M\nh$, have 
well-de\-fin\-ed unique kernels and images. The next displayed condition uses
the natural concept of pro\-ject\-a\-bil\-i\-ty, presented in
Sect.\,\ref{pr}: for
the in\-te\-gra\-bi\-li\-ty of a con\-stant-rank symmetric $\,(0,2)$ tensor 
$\,g\,$ on a manifold, it is necessary and sufficient -- as we justify in
Sect.\,\ref{ss} -- that
\begin{equation}\label{nas}
\mathrm{the\ distribution\ }\,\mathrm{Ker}\,g\,\mathrm{\ be\
in\-te\-gra\-ble,\ and\ }\,g\,\mathrm{\ pro\-ject\-a\-ble\ along\
}\,\mathrm{Ker}\,g\hh.
\end{equation}
Condition (\ref{nas}), rephrased as $\,\Lie\hskip-1.2pt_v\w g=0\,$ for every
local section $\,v\,$ of $\,\mathrm{Ker}\,g$, is well known to be an
in\-te\-gra\-bi\-li\-ty test for $\,g$. To the best of our knowledge, this
fact goes back to Moisil \cite{moisil} and Vr\u anceanu \cite{vranceanu}. See
also \cite{crampin,oproiu,sulik}, \cite[Theorem\,5.1]{duggal-bejancu}.

The sweeping recent result of Bandyopadhyay, Dacorogna, Mat\-ve\-ev and
Tro\-ya\-nov 
\cite[Theorem 4.4]{bandyopadhyay-dacorogna-matveev-troyanov} provides a
characterization of local constancy for $\,(0,2)$ tensors {\it without any 
sym\-me\-try$\hh/\nnh$skew-sym\-me\-try assumptions}. The criterion
(\ref{nas}), much more modest in scope, focuses on the symmetric case and
in\-te\-gra\-bi\-li\-ty (as opposed to local constancy); what
we gain is simplicity of the resulting conditions.

In contrast with $\,1$-forms (Proposition~\ref{dffrm}), the local constancy of
a vector field obviously follows just from its algebraic constancy. An
analogous
difference occurs between symmetric $\,(2,0)\,$ tensors and symmetric
$\,(0,2)$ tensors: 
the former -- unlike the latter -- require no pro\-ject\-a\-bil\-i\-ty 
condition to guarantee in\-te\-gra\-bi\-li\-ty.
\begin{prop}\label{inttz}The in\-te\-gra\-bi\-li\-ty of 
a con\-stant-rank symmetric\/ %or skew-sym\-me\-tric\/  
$\,(2,0)\,$ tensor\/ $\,\jt\,$ on a manifold is equivalent
to the in\-te\-gra\-bi\-li\-ty of\/ $\,\mathrm{Im}\,\hs\jt$.
\end{prop}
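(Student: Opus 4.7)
The forward implication is immediate from (\ref{iii}) applied to $\dz:=\mathrm{Im}\,\jt$: any torsion-free $\nabla$ with $\nabla\jt=0$ makes the subbundle $\mathrm{Im}\,\jt$ parallel and hence involutive. The substance therefore lies in the converse, where I assume $\dz$ is integrable of constant rank $k=\mathrm{rank}\,\jt$ and must produce a torsion-free connection with $\nabla\jt=0$. By the partition-of-unity observation after (\ref{imp}), it suffices to build such a connection in a neighborhood of each point.

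The local model I have in mind uses foliation coordinates $(x^1,\dots,x^n)$ adapted to $\dz$, so that the leaves are the slices $x^{k+1}=\mathrm{const},\dots,x^n=\mathrm{const}$. Viewing $\jt$ as the bundle map $T^*M\to TM$ with image in $\dz$, and combining this with the symmetry $\jt^{ij}=\jt^{ji}$, I get $\jt=\sum_{i,j\le k}\jt^{ij}\partial_i\otimes\partial_j$, with the leafwise $k\times k$ block invertible everywhere; its leafwise inverse $[\jt_{ij}]$ plays the role of a pseudo-Riemannian metric along each leaf, varying smoothly in the transverse parameters.

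My ansatz is to define $\nabla$ by the leafwise Levi-Civita symbols of $\jt_{ij}$ for indices all in $\{1,\dots,k\}$, by the symmetric Koszul-type expression $\Gamma^m_{\alpha i}=\Gamma^m_{i\alpha}=\frac12\jt^{mp}\partial_\alpha\jt_{pi}$ (for $i,m\le k$ and $\alpha>k$) in the mixed case, and by zero otherwise. Torsion-freeness is manifest from the symmetry of the prescription. The equation $\nabla\jt=0$ then splits by the leaf-or-transverse type of the three indices $c,i,j$: the purely leafwise case reduces to the standard Levi-Civita identity for the inverse metric; the case $c>k$ with $i,j\le k$ is precisely what dictates the second family of Christoffels, via the relation $\partial_\alpha\jt_{pi}=\Gamma^m_{\alpha i}\jt_{mp}+\Gamma^m_{\alpha p}\jt_{mi}$ whose symmetric-part solution is the Koszul expression above; and the remaining cases, with $i>k$ or $j>k$, collapse to $0=0$ because both the relevant components of $\jt$ and the relevant $\Gamma$'s have been set to zero.

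The only nontrivial step is the case-by-case verification just described; the ansatz is essentially forced (up to the symmetric-part ambiguity in the mixed Christoffels) by the very requirement $\nabla\jt=0$. For globalization I would fix an auxiliary torsion-free connection $\nabla^0$, write each local $\nabla$ as $\nabla^0+S$ with a symmetric $(1,2)$-tensor $S$ satisfying the linear equation $S(\jt)=-\nabla^0\jt$, and average the $S$'s using a partition of unity subordinate to the cover by foliation charts; the resulting global $(1,2)$-tensor satisfies the same equation and yields the desired torsion-free connection with $\nabla\jt=0$ on $M$.
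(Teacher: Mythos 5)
Your proof is correct, and its skeleton coincides with the paper's: the forward implication via (\ref{iii}), and for the converse, coordinates adapted to the foliation of $\,\mathrm{Im}\,\jt$, the observation that $\,\jt^{ia}=\jt^{ab}=0\,$ while the leafwise block $\,\jt^{ij}$ is invertible (cf.\ (\ref{ndg})), the leafwise Le\-vi-Ci\-vi\-ta connection of the inverse metric $\,\jt_{ij}$, and an extension to a tor\-sion-free connection on the ambient manifold. The genuine difference lies in the extension step, and it is to your credit. The paper performs the extension by Remark~\ref{nwcon}, which sets every Christof\-fel symbol carrying a transverse index equal to zero; with that choice one gets $\,(\nabla_{\!\alpha}\jt)^{ij}=\partial_\alpha\jt^{ij}$ for a transverse index $\,\alpha$, and this need not vanish, since the leafwise metric genuinely varies with the transverse coordinates (the paper itself notes that the Le\-vi-Ci\-vi\-ta symbols may depend on $\,x^a$); the remark is unproblematic in its other application, in Sect.\,\ref{ss}, only because projectability there forces $\,\partial_a g_{ij}=0$. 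Your nonzero mixed symbols $\,\Gamma^m_{\alpha i}=\Gamma^m_{i\alpha}=\tfrac12\jt^{mp}\partial_\alpha\jt_{pi}$ are exactly what is needed to absorb the transverse derivative: using $\,\partial_\alpha\jt^{ij}=-\jt^{ip}\jt^{jl}\partial_\alpha\jt_{pl}$ and the symmetry of $\,\jt$, one checks that $\,\partial_\alpha\jt^{ij}+\Gamma^i_{\alpha l}\jt^{lj}+\Gamma^j_{\alpha l}\jt^{il}=0$, while all components of $\,\nabla\jt\,$ with an upper index in the transverse range vanish because the corresponding $\,\Gamma$'s and components of $\,\jt\,$ are zero. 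So your argument supplies a verification that the paper's appeal to Remark~\ref{nwcon} leaves unjustified at this level of generality. Your closing globalization by averaging the difference tensors $\,S=\nabla-\nabla^0$ (the equation $\,S\cdot\jt=-\nabla^0\jt\,$ being affine in $\,S$) is also correct, though it merely re-proves the par\-ti\-tion-of-uni\-ty observation the paper already records after (\ref{imp}).
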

Such tensors $\,\jt\,$ can be naturally identified (see Remark~\ref{sbpsr})
with {\it \hbox{sub\hh-\hn}pseu\-\hbox{do\hs-}Riem\-ann\-i\-an
metrics\/} \cite{grochowski-krynski}, which include the 
\hbox{sub\hh-\hn}Riem\-ann\-i\-an ones \cite{bejancu}.

For a bi\-vec\-tor, that is, a skew-sym\-me\-tric $\,(2,0)\,$ tensor $\,\jt$,
assumed to have constant rank, formula (\ref{rst}) defines the {\it
restriction\/} of $\,\jt\hs$ to $\,\bz=\mathrm{Im}\,\hs\jt$, which is a
nondegenerate section of $\,\bz^{\wedge2}\nh$, thus giving rise to its
inverse, a section of $\,[\bz^*]^{\wedge2}\nh$.
\begin{prop}\label{crbiv}A con\-stant-rank bi\-vec\-tor\/ $\,\jt\,$ on a
manifold is locally constant or -- equivalently -- in\-te\-gra\-ble if and
only if the distribution\/
$\,\mathrm{Im}\,\hs\jt\hs$ is in\-te\-gra\-ble and the inverse of the
restriction of\/ $\,\jt\,$ to\/ $\,\mathrm{Im}\,\hs\jt\hs$ is closed along each
leaf of\/ $\,\mathrm{Im}\,\hs\jt$.
\end{prop}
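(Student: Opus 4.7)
The plan is to establish a three-way equivalence among local constancy, in\-te\-gra\-bi\-li\-ty, and the stated geometric condition. The implication from local constancy to in\-te\-gra\-bi\-li\-ty is (\ref{imp}), so it remains to show that in\-te\-gra\-bi\-li\-ty entails the condition on $\,\bz=\mathrm{Im}\,\hs\jt$, and that this condition in turn forces local constancy.

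First I would treat the direction from in\-te\-gra\-bi\-li\-ty to the geometric condition. Pick a tor\-sion-free connection $\,\nabla\,$ with $\,\nabla\jt=0$. Then $\,\bz\,$ is $\,\nabla$-par\-al\-lel, hence in\-te\-gra\-ble by (\ref{iii}). Because $\,\bz\,$ is $\,\nabla$-par\-al\-lel and in\-te\-gra\-ble, $\,\nabla\,$ restricts to a tor\-sion-free connection $\,\nabla^L$ on each leaf $\,L\,$ of $\,\bz$; tor\-sion-free\-ness survives the restriction since $\,[X,Y]\in\Gamma(\bz)\,$ for any local sections $\,X,Y\,$ of $\,\bz$. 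The restriction of $\,\jt\,$ to $\,L\,$ is $\,\nabla^L$-par\-al\-lel and non\-de\-gen\-er\-ate, so its inverse $\,\omega_L\in\Omega^2(L)\,$ is likewise $\,\nabla^L$-par\-al\-lel, and par\-al\-lel\-ism under a tor\-sion-free connection forces closedness via $\,d\omega_L=\mathrm{Alt}\,\nabla^L\omega_L=0$.

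The harder direction is from the geometric condition to local constancy. The two hypotheses together say exactly that $\,\jt\,$ is a regular Poisson tensor of constant rank $\,2k$: in\-te\-gra\-bi\-li\-ty of $\,\bz\,$ together with leafwise closedness of $\,\omega_L\,$ is equivalent to the vanishing of the Schouten bracket $\,[\jt,\jt]=0$. The aim is to build, around any $\,x_0\in M$, local coordinates $\,x^1\nh,\ldots,x^{2k}\nh,y^1\nh,\ldots,y^{n-2k}\,$ in which the leaves of $\,\bz\,$ are the slices $\,\{y=\mathrm{const}\}\,$ and the leafwise form reads $\,\omega_L=\sum_{i=1}^k dx^{2i-1}\nh\wedge dx^{2i}$; in such coordinates $\,\jt=\sum_{i=1}^k\partial_{x^{2i-1}}\nh\wedge\partial_{x^{2i}}$ has constant components and so is locally constant. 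The construction proceeds by choosing a transverse sub\-man\-i\-fold $\,S\,$ through $\,x_0\,$ with coordinates $\,y^1\nh,\ldots,y^{n-2k}$, foliating a neighborhood by leaves of $\,\bz$, and straightening the smoothly varying family of leafwise symplectic forms to the constant model by Moser's trick.

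The main obstacle is this parameter-dependent Darboux step: the normalizing dif\-feo\-mor\-phisms must depend smoothly on the transverse parameter $\,y$, so it is not a formal corollary of the ordinary Darboux theorem. This is classical (a regular-rank instance of Weinstein's splitting theorem) and reduces to Moser's method applied to the linear interpolation between the given leafwise form and its constant model, with smooth transverse dependence handled by a parameter-ver\-sion of the Poincar\'e lemma. Once local constancy has been established on a neighborhood of each point, in\-te\-gra\-bi\-li\-ty follows from (\ref{imp}), completing the equivalence.
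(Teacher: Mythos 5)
Your proposal is correct and follows essentially the same route as the paper: the ``only if'' direction via a tor\-sion-free connection making $\,\jt\,$ parallel (hence the leafwise inverses parallel and closed), and the converse via a pa\-ram\-e\-ter-de\-pend\-ent Dar\-boux theorem producing leafwise symplectic coordinates, completed by transverse functions whose differentials trivialize $\,\mathrm{Ker}\,\jt\subseteq T^*\hskip-1.8ptM\nh$. The paper cites \cite[Lemma 3.10]{bandyopadhyay-dacorogna-matveev-troyanov} for exactly the smooth-in-pa\-ram\-e\-ters Dar\-boux step that you attribute to Moser's trick and the regular case of Wein\-stein's splitting theorem; your added remark identifying the hypotheses with $\,[\hh\jt,\jt\hh]=0\,$ is a harmless reframing, not a different argument.
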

The generalizations of the Nijen\-huis tensor which are of interest to us are 
motivated by Remark~\ref{cdiag}, Theorem~\ref{cntrl} and
Proposition~\ref{dffrm}:
we want to associate with a given tensor $\,\jt\hs$ one (or more)
Nijen\-huis-type tensor(s), each depending on $\,\jt\hs$ via a
qua\-si\-lin\-e\-ar 
first-or\-der differential operator, in such a way that, if $\,\jt\hs$
algebraically constant, the vanishing of these tensors is equivalent to
the in\-te\-gra\-bi\-li\-ty of $\,\jt$.

As an example, $\,N\hs$ given by (\ref{nih}) serves in this capacity for
com\-plex-di\-ag\-o\-nal\-iz\-able $\,(1,1)\,$ tensors and nil\-po\-tent
$\,(1,1)\,$ tensors with the property (\ref{csd}); its
qua\-si-lin\-e\-ar\-i\-ty is immediate from (\ref{tfr}). In the case of
differential $\,q$-forms $\,\zeta\hh$ in dimension $\,n$, where
$\,q\in\{0,1,2,n-1,n\}\,$ (but not -- see (\ref{nin})  -- for other degrees),
the exterior derivative $\,d\hh\zeta\hh$ is a 
Nijen\-huis-type tensor in our sense, while an analogous role for vector
fields is played by the zero tensor.

For any symmetric $\,(0,2)\,$ tensor $\,g\,$ of constant rank $\,r\,$ on a
manifold $\,M\nh$, we introduce two Nijen\-huis-type tensors
$\,N'$ and $\,N''\nh$, both of type $\,(0,2r+3)$, defined as follows:
$\,N'$ (or, $\,N''$)
sends vector fields $\,v,v_1\w,\dots,v_r\w$ (or, $\,w,u,v_1\w,\dots,v_r\w$)
to the $\,(r+2)$-form, or $\,(r+1)$-form
\begin{equation}\label{nnh}
\begin{array}{rl}
\mathrm{a)}&N'\nh(v,v_1\w,\dots,v_r\w)
=d\hs[g(v,\,\cdot\,)]\wedge g(v_1\w,\,\cdot\,)\wedge\ldots\wedge
g(v_r\w,\,\cdot\,)\hh,\\
\mathrm{b)}&N''\nh(w,u,v_1\w,\dots,v_r\w)
=\{[\Lie\nh g](w,u)\}\wedge g(v_1\w,\,\cdot\,)\wedge\ldots\wedge
g(v_r\w,\,\cdot\,)\hh,
\end{array}
\end{equation}
$[\hn\Lie\nh g](w,u)\,$ being treated here, formally, as a $\,1$-form 
sending any vector field $\,v$ to the function
$\,[\hn\Lie\hskip-1.2pt_v\w g](w,u)$. The
word `formally' reflects the fact that $\,[\hn\Lie\nh g](w,u)\,$ is not
tensorial in $\,v$.
Nevertheless, in Sect.\,\ref{tc} we point out that $\,N'$ and
$\,N''$ are well-defined tensors, and prove the following result.
\setcounter{theorem}{6}
\begin{theorem}\label{twoni}The vanishing of both\/ $\,N'\nnh$ and\/
 $\,N''\nnh$
is necessary and sufficient for the in\-te\-gra\-bi\-li\-ty of the given
symmetric\/ $\,(0,2)\,$ tensor\/ $\,g\,$ of constant rank $\,r$.
\end{theorem}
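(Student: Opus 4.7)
The plan is to translate the vanishing of $\,N'\,$ and $\,N''\,$ into the two clauses of the integrability criterion (\ref{nas}): integrability of $\,\mathrm{Ker}\,g\,$ and projectability of $\,g\,$ along $\,\mathrm{Ker}\,g$. Once both are read off, the theorem follows by invoking (\ref{nas}), which was established in Sect.~\ref{ss}.

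First, I would confirm that $\,N'\,$ and $\,N''\,$ are genuine tensors, in spite of being built from the nontensorial operators $\,d\,$ and $\,\Lie$. Replacing $\,v\,$ by $\,fv\,$ in $\,N'\,$ produces the extra summand $\,df\wedge g(v,\cdot)$, which is killed upon wedging with $\,g(v_1,\cdot)\wedge\cdots\wedge g(v_r,\cdot)$, since $\,g(v,\cdot)\,$ and each $\,g(v_i,\cdot)\,$ lie in the rank-$r$ bundle $\,\mathrm{Im}\,g$. For $\,N''\nnh$, a direct computation shows that substituting $\,fv\,$ for $\,v\,$ in the formal $1$-form $\,[\Lie\hh g](w,u)\,$ adds a $1$-form of the shape $\,w(f)\hh g(u,\cdot)+u(f)\hh g(w,\cdot)$, which again lies in $\,\mathrm{Im}\,g$, and so is annihilated by the same rank-$r$ collapse.

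Second, I would pick $\,v_1,\dots,v_r\,$ locally so that $\,g(v_1,\cdot),\dots,g(v_r,\cdot)\,$ form a frame of $\,\mathrm{Im}\,g$. Writing a $\,2$-form $\,\beta\,$ in a dual frame extending this one shows that $\,\beta\wedge g(v_1,\cdot)\wedge\cdots\wedge g(v_r,\cdot)=0\,$ is equivalent to $\,\beta\,$ lying in the algebraic ideal generated by $\,\mathrm{Im}\,g$, which in turn is equivalent to $\,\beta\,$ vanishing on pairs from $\,\mathrm{Ker}\,g$. Applying this to $\,\beta=d[g(v,\cdot)]\,$ and using Cartan's formula -- while noting that $\,g(v,X)=g(v,Y)=0\,$ for $\,X,Y\in\mathrm{Ker}\,g\,$ -- collapses $\,\beta(X,Y)\,$ to $\,-g(v,[X,Y])$. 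Demanding that this vanishes for all $\,v\,$ amounts to $\,[X,Y]\in\mathrm{Ker}\,g$, and so $\,N'=0\,$ is equivalent to the integrability of $\,\mathrm{Ker}\,g$.

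Third, the analogous ideal argument for a $1$-form $\,\alpha\,$ in place of a $2$-form shows that $\,\alpha\wedge g(v_1,\cdot)\wedge\cdots\wedge g(v_r,\cdot)=0\,$ iff $\,\alpha\in\mathrm{Im}\,g$, iff $\,\alpha\,$ annihilates $\,\mathrm{Ker}\,g$. Applied to the $1$-form $\,v\mapsto[\Lie\hskip-1.2pt_v\w g](w,u)\,$ this gives $\,N''=0\,$ iff $\,\Lie\hskip-1.2pt_v\w g=0\,$ for every local section $\,v\,$ of $\,\mathrm{Ker}\,g$, which is precisely the projectability of $\,g\,$ along $\,\mathrm{Ker}\,g$. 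Combining the two equivalences with (\ref{nas}) proves the theorem in both directions. The principal obstacle I anticipate is the tensoriality of $\,N''\nnh$: the failure of $C^\infty$-linearity in $v$ of $\,[\Lie\hh g](w,u)\,$ produces several correction terms that must each be shown to land in $\,\mathrm{Im}\,g\,$ at the relevant slot, so that the rank-$r$ wedging absorbs them; once this bookkeeping is settled, everything else reduces to linear algebra, Cartan's formula, and a direct appeal to (\ref{nas}).
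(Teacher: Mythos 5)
Your proposal is correct and follows essentially the same route as the paper: both arguments reduce the theorem to the criterion (\ref{nas}), identify $\,N'\nnh=0\,$ with the in\-te\-gra\-bi\-li\-ty of $\,\mathrm{Ker}\,g\,$ and $\,N''\nnh=0\,$ with the condition $\,\Lie\hskip-1.2pt_v\w g=0\,$ for local sections $\,v\,$ of $\,\mathrm{Ker}\,g$, and use the same linear-algebra mechanism (wedging against $\,r\,$ sections of the rank-$r$ annihilator of $\,\mathrm{Ker}\,g\,$ isolates exactly the transverse component, which also settles ten\-so\-ri\-al\-i\-ty). The one small imprecision is your equating $\,\Lie\hskip-1.2pt_v\w g=0\,$ with ``pro\-ject\-a\-bil\-i\-ty of $\,g\,$ along $\,\mathrm{Ker}\,g$'' alone --- in the paper's conventions that notion presupposes an in\-te\-gra\-ble kernel, and the Lie-derivative condition is in fact equivalent to the whole of (\ref{nas}) --- but since you invoke the two vanishings jointly, this costs nothing.
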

The lines following formula (\ref{bcm}) in Sect.\,\ref{ik} provide a set 
of $\,d_1\w$ Nijen\-huis-type tensors for an
algebraically constant nil\-po\-tent $\,(1,1)\,$ tensor $\,\jt\,$ with
the Jor\-dan normal form $\,d_1\w\nnh\ldots d_m\w$. This set consists of
$\,N\hs$ -- see (\ref{nih}) -- and $\,d_1\w\nh-1$ additional tensors
responsible for in\-te\-gra\-bi\-li\-ty of $\,\mathrm{Ker}\,\jt^i$, 
$\,1\le i<d_1$ (which makes them redundant in the case (\ref{csd}), due to
Theorem~\ref{cntrl}).

Finally, formulae (\ref{hni}) -- (\ref{omg}) in Sect.\,\ref{ik} define, for
(skew)sym\-me\-tric $\,(2,0)$ tensors $\,\jt\,$ of constant rank $\,r$, a 
Nijen\-huis-type $\,(2r+3,0)\,$ tensor 
$\,\hni\hs$ such that $\,\hni\nnh=\hh0\,$ if and only if
$\,\mathrm{Im}\,\hs\jt\hs$ is in\-te\-gra\-ble. When $\,\jt\,$ is symmetric,
vanishing of $\,\hni\hs$ thus amounts, by Proposition~\ref{inttz}, to
in\-te\-gra\-bi\-li\-ty of $\,\jt$. However, for a rank $\,r$ 
{\it bi\-vec\-tor\/} $\,\jt$, the condition $\,\hni\nnh=\hh0$, despite still
being necessary, is not sufficient in order that $\,\jt\,$ be
in\-te\-gra\-ble. Obvious examples illustrating the last claim arise, cf.\ 
Proposition~\ref{crbiv}, on a product manifold
$\,M\nh=\vs\nnh\times\nnh\vs'\nh$, with $\,\jt\,$ obtained as the trivial
extension to $\,M\,$ of the inverse of a non\-clos\-ed nondegenerate
$\,2$-form on $\,\vs$.

\renewcommand{\thetheorem}{\thesection.\arabic{theorem}}
\renewcommand{\theprop}{\thesection.\arabic{prop}}
\section{Preliminaries}\label{pr}
\setcounter{equation}{0}
Manifolds (by definition connected) and mappings, including sections of
bundles, are always assumed to be smooth. Tensor fields will usually be
referred to as {\it tensors}. All vector spaces are 
real (except in Sect.\,\ref{cd}) and fi\-nite-di\-men\-sion\-al. 

Given a manifold $\,M\,$ and vector sub\-bun\-dles $\,\dz,\dz'\nh,\dz''$ 
of $\,T\nh M\nh$, we write
\begin{equation}\label{lbr}
[\dz,\dz']\,\subseteq\,\dz'' 
\end{equation}
when $\,[w,w']\,$ is a local section of $\,\dz''$ for any local sections
$\,w\,$ of $\,\dz\,$ and $\,w'$ of $\,\dz'\nh$.
\begin{lemma}\label{cdone}For\/ $\,M\nh,\dz,\dz'$ as above, suppose that\/ 
$\,\dz\hs$ contains $\,\dz'\nnh$ with co\-dimen\-sion one, and\/
$\,[\dz,\dz']\subseteq\dz$. Then\/ $\,\dz\,$ is in\-te\-gra\-ble.
\end{lemma}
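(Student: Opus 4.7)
The goal is to verify the Frobenius involutivity condition $[\dz,\dz]\subseteq\dz$. The plan is to reduce this to the hypothesis $[\dz,\dz']\subseteq\dz$ by exploiting the codimension-one assumption: locally only a single extra vector field needs to be added to a frame of $\dz'$ in order to get a frame of $\dz$, so any bracket of two $\dz$-sections either collapses to a bracket already controlled by the hypothesis, or is manifestly a multiple of that one extra vector field.

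Near any point of $M$, I would pick a local frame $e_1,\dots,e_{k-1}$ of $\dz'$ and extend it by a single vector field $e_k$ to a local frame $e_1,\dots,e_k$ of $\dz$; this is possible precisely because $\dz'$ has codimension one in $\dz$. Any two local sections $X,Y$ of $\dz$ then take the form $X=v'+f e_k$ and $Y=w'+g e_k$, with $v',w'$ local sections of $\dz'$ and $f,g$ smooth functions.

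Expanding $[X,Y]$ by bi-linearity and the Leibniz rule for vector-field brackets produces four kinds of contributions. First, $[v',w']$, which lies in $\dz$ because $[\dz,\dz']\subseteq\dz$ and $v'$ is a section of $\dz'\subseteq\dz$. Second, the terms $g[v',e_k]$ and $f[e_k,w']$, which also lie in $\dz$ by the same hypothesis (each bracket pairs a section of $\dz'$ with the $\dz$-section $e_k$). Third, the Leibniz corrections $(v'g)e_k$ and $-(w'f)e_k$, which are scalar multiples of $e_k\in\dz$. Fourth, $[f e_k,g e_k]=f(e_k g)e_k-g(e_k f)e_k$, since $[e_k,e_k]=0$, and this is again a multiple of $e_k$. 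Summing everything, $[X,Y]$ is a section of $\dz$, so $\dz$ is involutive and hence integrable by the Frobenius theorem.

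The argument is really just bookkeeping and presents no serious obstacle. The one conceptual point carrying the proof is that codimension one forces exactly one extra direction, so the only brackets that are not covered a priori by $[\dz,\dz']\subseteq\dz$ are those involving $e_k$ with itself, and these are harmless because $[e_k,e_k]=0$ leaves only Leibniz terms that are automatically multiples of $e_k\in\dz$.
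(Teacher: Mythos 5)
Your proof is correct and follows essentially the same route as the paper's: the paper likewise writes local sections of $\,\dz\,$ as $\,v+\phi\hh w\,$ with $\,v\,$ a section of $\,\dz'\nh$, $\,\phi\,$ a function, and $\,w\,$ one fixed extra section of $\,\dz$, and then observes that every bracket term is either covered by $\,[\dz,\dz']\subseteq\dz\,$ or is a multiple of $\,w$. Your version merely spells out the Leibniz bookkeeping that the paper leaves implicit.
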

\begin{proof}As $\,[\dz'\nh,\dz']\subseteq\dz$, the relation
$\,[\dz,\dz]\subseteq\dz\,$ follows if we note that, locally, sections of
$\,\dz\hs$ have the form $\,v+\phi w\,$ for various sections $\,v\,$ of
$\,\dz'\nh$, functions $\,\phi$, and one fixed section $\,w\,$ of $\,\dz$.
\end{proof}
Let $\,\pi:M\nh\to\vs\,$ be a mapping between manifolds. We say that
a vector field $\,w\,$ (or, a distribution $\,\zz$) on $\,M\,$ is 
$\,\pi\hn${\it-pro\-ject\-a\-ble\/} if $\,d\pi\nnh_x\w w_x\w=\hs u_{\pi(x)}\w$ 
or, respectively,
$\,d\pi\nnh_x\w(\zz\hn_x\w)\,=\,\wz\nnh_{\pi(x)}\w$ for all
$\,x\in M\,$ and some vector field $\,u\,$ (or, distribution $\,\wz$)
on $\,\vs$. If this is the case,
\begin{equation}\label{pri}
\mathrm{the\ in\-te\-gra\-bi\-li\-ty\ of\ }\,\zz\,\mathrm{\ implies\ that\ of\
}\,\wz\nh,
\end{equation}
since $\,\pi\,$ restricted to any leaf of $\,\zz\,$ is, locally, a
submersion onto an integral manifold of $\,\wz\nnh$. We also define 
$\,\pi$-pro\-ject\-a\-bil\-i\-ty of a $\,(0,q)\,$ tensor field $\,\jt\hn\,$
on $\,M\nh\,$ by requiring $\,\jt\,$ to be the $\,\pi$-pull\-back of a 
$\,(0,q)\,$ tensor field on $\,\vs$.

Given an integrable distribution
$\,\vz\,$ %(always assumed to have a positive dimension)
on a manifold $\,M\nh$, 
every point of $\,M\,$ has a neighborhood $\,\,U$ such that, for some 
manifold $\,\vs$, the leaves of $\,\vz\,$ restricted to $\,\,U\hs$ are the
fibres of a bundle projection $\,\pi\nnh:\nh U\to\vs$.

Let $\,\vz\,$ be an in\-te\-gra\-ble distribution 
%(always assumed to have a positive dimension)
on a manifold $\,M\nh$. By $\,\vz\nh${\it-pro\-ject\-a\-bil\-i\-ty\/} of a
vector field on 
an open set $\,\,U'\subseteq M\,$ (or, of a distribution on $\,\,U'\nh$, or of
a $\,(0,q)$ tensor field on $\,\,U'$) we mean
its $\,\pi$-pro\-ject\-a\-bil\-i\-ty for any $\,\pi,\hs U\nh,\vs\,$ as in the
last paragraph such that $\,\,U\subseteq U'\nh$. Then, for a vector field
$\,w\,$ on $\,M\nh$,
\begin{equation}\label{prj}
\begin{array}{l}
w\,\,\mathrm{\ is\ }\,\,\vz\hyp\mathrm{pro\-ject\-a\-ble\ if\ and\ only\ if,\ 
for\ every\ section}\\
v\,\mathrm{\ of\ }\hs\vz\nh\mathrm{,\ the\ Lie\ 
bracket\ }\hs[v,\nh w]\hs\mathrm{\ is\ also\ a\ section\ of\
}\hs\vz\nh.
\end{array}
\end{equation}
(This is obvious in local 
coordinates for $\,M\,$ turning $\,\pi\hs$ as above into a 
Car\-te\-sian-prod\-uct projection.) It is also clear that, given a
$\,(0,q)\,$ tensor field $\,\jt$,
\begin{equation}\label{prt}
\begin{array}{l}
\jt\hs\mathrm{\ is\ }\hs\vz\hyp\mathrm{pro\-ject\-a\-ble\ if\ and\ only\
if\ }\nh\,d_v\w[\jt(w_1,\dots,w_q\w)]\hh=\hh\hs0\nh\,\mathrm{\ for\ all\ sec}\hyp\\
\mathrm{tions\ }\hs\,v\hs\,\mathrm{\ of\ }\,\hs\vz\,\mathrm{\ and\ all\
}\,\hs\vz\hyp\mathrm{pro\-ject\-a\-ble\ local\ vector\ fields\
}\,\,w_1,\dots,w_q\w\hh.
\end{array}
\end{equation}
\begin{lemma}\label{prdis}For an in\-te\-gra\-ble distribution\/ $\,\vz\,$
and any distribution\/ $\,\zz\,$ on an $\,n$-di\-men\-sion\-al manifold\/
$\,M\nh$, the following two
conditions are equivalent.
\begin{enumerate}
\item[{\rm(a)}] $\zz\,$ is\/ $\,\vz$-pro\-ject\-a\-ble,
\item[{\rm(b)}] $\zz\cap\hs\vz\hs$ has a constant dimension and\/
$\,[\vz,\zz]\subseteq\vz+\nh\zz$.
\end{enumerate}
Under the additional assumption that $\,\vz\subseteq\zz$,
\begin{enumerate}
\item[{\rm(c)}] $\zz\,$ is\/ $\,\vz$-pro\-ject\-a\-ble if and only if\/ 
$\,[\vz,\zz]\subseteq\zz$.
\end{enumerate}
\end{lemma}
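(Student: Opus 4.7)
The plan is to argue locally, via a projection $\,\pi\nnh:\nh U\to\vs\,$ whose fibres are the leaves of $\,\vz$, as furnished by the paragraph preceding the statement (so $\,\vz=\mathrm{Ker}\,d\pi$). For (a)$\,\Rightarrow\,$(b), if $\,\zz\,$ projects to $\,\wz\,$ on $\,\vs$, rank-nullity applied to $\,d\pi_x|_{\zz_x}$, whose kernel is $\,(\zz\cap\vz)_x$, gives $\,\dim(\zz\cap\vz)=\dim\zz-\dim\wz$, a constant. For the bracket condition I would lift, near each point, a local frame of $\,\wz\,$ in $\,\pi$-adapted coordinates to $\,\vz$-pro\-ject\-a\-ble sections $\,Z_1,\dots,Z_s\,$ of $\,\zz$, and adjoin a local frame $\,u_1,\dots,u_d\,$ of $\,\zz\cap\vz\,$ to obtain a frame of $\,\zz$. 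Then (\ref{prj}) yields $\,[v,Z_i]\in\vz\,$ for any section $\,v\,$ of $\,\vz$, and $\,[v,u_j]\in\vz\,$ by integrability of $\,\vz$; expanding any section $\,w\,$ of $\,\zz\,$ in this frame immediately gives $\,[v,w]\in\vz+\zz$.

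For (b)$\,\Rightarrow\,$(a), the constant rank of $\,\zz\cap\vz\,$ makes $\,\zz+\vz\,$ a smooth sub\-bun\-dle of $\,T\nh M$. The crucial observation is $\,[\vz,\zz+\vz]\subseteq\zz+\vz\,$: the $\,\zz$-part follows from (b), while the $\,\vz$-part comes from integrability of $\,\vz$. This means the local flow of any section of $\,\vz\,$ preserves $\,\zz+\vz$; since such a flow fixes each fibre of $\,\pi$, the subspace $\,d\pi_x(\zz_x)=d\pi_x((\zz+\vz)_x)\subseteq T_{\pi(x)}\vs\,$ does not vary with $\,x\,$ along a fibre, and so descends to a smooth sub\-bun\-dle $\,\wz\,$ of $\,T\vs\,$ (smoothness being witnessed by any local section of $\,\pi$). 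Part (c) is then immediate: $\,\vz\subseteq\zz\,$ forces $\,\zz\cap\vz=\vz$, trivially of constant rank, and $\,\vz+\zz=\zz$, collapsing the bracket inclusion in (b) to $\,[\vz,\zz]\subseteq\zz$.

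I expect the most delicate point to be the flow-in\-var\-i\-ance step in (b)$\,\Rightarrow\,$(a): upgrading the infinitesimal inclusion $\,[\vz,\zz+\vz]\subseteq\zz+\vz\,$ to genuine invariance of the sub\-bun\-dle $\,\zz+\vz\,$ under the flows of sections of $\,\vz$. A purely algebraic alternative is to pick coordinates $\,(x^i,y^j)\,$ with $\,\vz=\mathrm{span}\{\partial/\partial y^j\}\,$ and $\,\pi(x,y)=x$, write a frame of $\,\zz\,$ whose matrix of $\,\partial/\partial x^i$-components has constant rank, and use (b) to show its row span is independent of the $\,y^j$.
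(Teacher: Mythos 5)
Your proposal is correct, and its outer shell matches the paper's: both directions hinge on reducing to a bracket condition on a distribution containing $\,\vz\,$ (you use $\,\zz+\nh\vz$, the paper first proves (c) for the case $\,\vz\subseteq\zz\,$ and then deduces (a)$\,\Leftrightarrow\,$(b), i.e., the same reduction run in the opposite order), and your (a)$\,\Rightarrow\,$(b) argument -- rank-nullity plus a frame of projectable lifts and (\ref{prj}) -- is essentially the paper's. The genuine divergence is in the hard direction. The paper never invokes flows: in coordinates adapted to $\,\vz\,$ it writes $\,[\partial\nh_i\w,w_a\w]=\vg_{\hskip-2.7ptia}^{\hs j}\partial\nh_j\w+\vg_{\hskip-2.7ptia}^{\hs b}w_b\w$, observes via the Ja\-co\-bi identity that the $\,\vg_{\hskip-2.7ptia}^{\hs b}$ are the components of a \emph{flat} connection along the leaf directions, and uses its parallel sections to rotate the frame $\,w_a\w$ into one whose brackets with $\,\partial\nh_i\w$ land in $\,\vz$, i.e., into a projectable frame. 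You instead pass from $\,[\vz,\zz+\nh\vz]\subseteq\zz+\nh\vz\,$ to invariance of $\,\zz+\nh\vz\,$ under the flows of sections of $\,\vz\,$ and then push the fibrewise image $\,d\pi(\zz)\,$ down to $\,\vs$. That step is a true and standard lemma, so your route closes, and it is arguably more conceptual; but be aware that its proof (and likewise your coordinate alternative, showing the row span of the $\,\partial/\partial x^i$-components is independent of the fibre variables) is exactly the linear-ODE-along-the-leaves computation that the paper performs explicitly -- the flatness of $\,\vg_{\hskip-2.7ptia}^{\hs b}$ is what makes the transported frame single-valued, and the parallel sections $\,\psi_{\!a}^b$ are the solution of that ODE system. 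So the paper's version buys a self-contained, constructive proof (it actually exhibits the projectable frame), while yours buys brevity at the cost of outsourcing the one nontrivial analytic step to a quoted lemma; you should either prove that flow-invariance statement (e.g., by the annihilator/uniqueness-of-ODE argument) or cite it precisely, since it is where all the work of the lemma resides.
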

\begin{proof}The equivalence of (a) and (b), once established, trivially
implies (c) when $\,\vz\subseteq\zz$. We proceed, however, by first proving
(c), in the case where $\,\vz\subseteq\zz$. It will then clearly follow from
(c) that (a) and (b) are equivalent, since $\,\vz$-pro\-ject\-a\-bil\-i\-ty
of $\,\zz\,$ amounts to $\,\vz$-pro\-ject\-a\-bil\-i\-ty of
$\,\vz+\nh\zz$, and $\,\vz\subseteq\vz+\nh\zz$.

If $\,\vz\subseteq\zz\,$ and $\,\zz\,$ is $\,\vz$-pro\-ject\-a\-ble, onto
some distribution $\,\wz\,$ on a local leaf space of $\,\vz$, then $\,\zz\,$
is spanned by $\,\vz$-pro\-ject\-a\-ble sections obtained as lifts of
sections of $\,\wz\,$ (including sections of $\,\vz\nh$, which are lifts
of $\,0$). As any section of $\,\zz$ is a functional combination of 
$\,\vz$-pro\-ject\-a\-ble ones, (\ref{prj}) yields
$\,[\vz,\zz]\subseteq\zz$. 

Conversely, let $\,\vz\subseteq\zz\,$ and $\,[\vz,\zz]\subseteq\zz$. We choose 
local coordinates $\,x^1\nh,\dots,x^n$ such that $\,\vz\,$ is spanned by the
coordinate vector fields $\,\partial\nh_i\w$, $\,i=1,\dots,m$, and a local
trivialization of the sub\-bun\-dle $\,\zz\,$ of $\,T\nh M\,$ having the form
$\,\partial\nh_1\w,\dots,\partial\nh_m\w,w_{m+1}\w,\dots,w_s\w$. Using the
index ranges $\,1\le i,j,k\le m<a,b,c\le s$, we obtain, since 
$\,[\vz,\zz]\subseteq\zz$,
\begin{equation}\label{diw}
[\partial\nh_i\w,w_a\w]\,=\,\vg_{\hskip-2.7ptia}^{\hs j}\partial\nh_j\w\hs
+\,\vg_{\hskip-2.7ptia}^{\hs b}w_b\w
\end{equation}
for some functions
$\,\vg_{\hskip-2.7ptia}^{\hs j},\,\vg_{\hskip-2.7ptia}^{\hs b}$. The
$\,w_b\w$-com\-po\-nent of the
Ja\-co\-bi identity
$\,[\partial\nh_i\w,[\partial\nh_j\w,w_a\w]]
=[\partial\nh_j\w,[\partial\nh_i\w,w_a\w]]$, with
$\,[\partial\nh_i\w,\partial\nh_j\w]=0$, now implies
{\it symmetry of\/ $\,\partial\nh_i\w\vg_{\hskip-2.7ptja}^{\hs b}
+\vg_{\hskip-2.7ptic}^{\hs b}\vg_{\hskip-2.7ptja}^{\hs c}$ in\/} $\,i,j$.
This symmetry amounts to the vanishing of the curvature,
that is, flatness, for the linear connection with the components 
$\,\vg_{\hskip-2.7ptia}^{\hs b}$ in a rank $\,s-m\,$ vector bundle over
a manifold with the coordinates $\,x^i$, $\,i=1,\dots,m$. The equations
$\,\partial\nh_i\w\psi^b\nnh+\vg_{\hskip-2.7ptic}^{\hs b}\psi^c\nh=0$,
stating
that $\,\psi^a\nh$, with $\,m<a\le s$, are the components of a parallel
section $\,\psi$, is thus locally solvable with any prescibed initial value at
a given point $\,z$. Let us choose such a section $\,\psi\nnh_a\w$, for 
$\,a=m+1,\dots,s$, with the initial 
value $\,(\delta_a^1,\dots,\delta_a^m)\,$ at $\,z$, so that 
\begin{equation}\label{dps}
\partial\nh_i\w\psi_{\!a}^b\hs
+\,\vg_{\hskip-2.7ptic}^{\hs b}\psi_{\!a}^c\hs=\,0\hh,\quad
\psi_{\!a}^b\nh(z)=\delta_a^b\hh.
\end{equation}
Setting $\,u_a\w=\psi_{\!a}^bw_b\w$, we obtain a new local
trivialization $\,\partial\nh_1\w,\dots,\partial\nh_m\w,u_{m+1}\w,\dots,u_s\w$ 
of $\,\zz\,$ while, by (\ref{diw}) and (\ref{dps}),
$\,[\partial\nh_i\w,u_a\w]\,$ are sections of $\,\zz$. Therefore, due to 
(\ref{prj}), our new local trivialization of $\,\zz\,$ consists of
$\,\vz$-pro\-ject\-a\-ble sections, which makes $\,\zz\,$ itself
$\,\vz$-pro\-ject\-a\-ble.
\end{proof}
Given a symmetric or skew-sym\-me\-tric $\,(2,0)\,$ tensor $\,\jt\hs$ in a
vector space $\,V\nh$, let $\,\mathrm{Ker}\,\jt\hs$ and 
$\,\mathrm{Im}\,\jt\hs$ be the kernel and image of
$\,V^*\nh\ni\xi\mapsto\jt(\xi,\,\cdot\,)\in V\nh$. By the {\it 
restriction\/} of $\,\jt\hs$ to $\,W=\mathrm{Im}\,\hs\jt\hs$ we mean the 
$\,(2,0)\,$ tensor $\,\jt\nh_W\w$ in $\,W\hs$ given by
\begin{equation}\label{rst}
\begin{array}{l}
\jt\nh_W\w\hn(\eta,\eta')=\jt(\xi,\xi')\,\mathrm{\ for\ any\ 
}\,\eta,\eta'\hn\in W^*\,\mathrm{and}\\
\mathrm{any\ extensions\ }\,\,\xi,\xi':V\to\bbR\,\mathrm{\ of\ 
}\,\eta,\eta'\mathrm{\ to\ }\,V\nnh.
\end{array}
\end{equation}
As $\,\xi\,$ and $\,\xi'$ are unique up to adding elements of
$\,W'\nh=\mathrm{Ker}\,\jt\subseteq V^*\nh$, the polar space of $\,W\nh$,
the restriction is well defined. In other words, since
$\,W'\nh=\mathrm{Ker}\,\jt$, the bi\-lin\-e\-ar
form $\,\jt\hs$ on $\,V^*$ descends to one on $\,V^*\nnh\nh/W'\nh=W^*\nh$,
which is our $\,\jt\nh_W\w$. (The natural identification of
$\,W^*$ with $\,V^*\nnh\nh/W'$ sends $\,\eta\in W^*$ to the
$\,W'\nh$-co\-set of any extension of $\,\eta\,$ to $\,V\nh$.) In addition,
$\,\jt\,$ is the image of $\,\jt\nh_W\w$ under the linear operator
$\,W^{\odot2}\nh\to V^{\odot2}$ or $\,W^{\wedge2}\nh\to V^{\wedge2}$ induced
by the inclusion $\,W\nh\to V\nh$. Finally,
\begin{equation}\label{ndg}
\mathrm{the\ restriction\ }\,\,\jt\nh_W\w\mathrm{\ is\ nondegenerate,}
\end{equation}
as any
$\,\eta\in W\smallsetminus\{0\}\,$ has an extension $\hs\xi$ to $\,V\hs$ not
lying in $\,W'\nh=\mathrm{Ker}\,\jt$, and hence $\,\jt(\xi,\xi')\ne0\,$ for
some $\,\xi'\nh\in V^*\nh$.
\begin{remark}\label{sbpsr}Con\-stant-rank symmetric $\,(2,0)\,$ tensors
$\,\jt\,$ on a manifold $\,M$ are naturally identified with \hbox{sub\hh-\hn}pseu\-\hbox{do\hs-}Riem\-ann\-i\-an metrics on $\,M\nh$, that is,
pseu\-\hbox{do\hs-}Riem\-ann\-i\-an fibre metrics $\,h\,$ on vector
sub\-bun\-dles $\,\bz\,$ of $\,T\nh M\nh$. In fact, one may set
$\,\bz=\mathrm{Im}\,\jt\,$ and, using (\ref{rst}) -- (\ref{ndg}), declare
$\,h\,$ to be the inverse of the restriction of $\,\jt\,$ to $\,\bz$.
Thus, cf.\ the lines preceding (\ref{ndg}), $\,\jt\,$ is the image of
the inverse of $\,h$ under the %vec\-tor-
bun\-dle mor\-phism 
$\,\bz^{\odot2}\nh\to[T\nh M]^{\odot2}$ induced 
by the inclusion $\,\bz\to T\nh M\nh$.
\end{remark}
The $\,(r\nh-1)$-fold contraction of two $\,(r,0)$-ten\-sors 
$\,\jt,\Pi\,$ on a manifold with a fixed Riemannian metric $\,g$, appearing in 
(i) below, is
\begin{equation}\label{rmo}
\mathrm{the\ }\,(2,0)\,\mathrm{\ tensor\ }\,\beta\,\mathrm{\ given\ by\
}\,\beta\hh^{ij}\nnh
=\jt^{ii_2\ldots i_r}\nh\Pi^{jj_2\ldots j_r}\nh g_{i_2\hn j_2}\w\ldots
g_{i_r\nh j_r}\w.
\end{equation}
\begin{remark}\label{hodge}Let $\,V$ be a Euclidean $\,n$-space with the
inner product $\,\lr$.
\begin{enumerate}
\item[{\rm(i)}] The $\,(r\nh-1)$-fold contraction (\ref{rmo}) against itself
of a nonzero de\-com\-pos\-a\-ble $\,r$-vec\-tor 
$\,v_1\w\wedge\ldots\wedge v_r\w\in V\nh^{\wedge r}$ yields a
$\,(2,0)\,$ tensor which, viewed with the aid of $\,\lr\,$ as an
en\-do\-mor\-phism of $\,V\nh$, equals a nonzero multiple of the
or\-thog\-onal 
projection onto the span of $\,v_1\w,\dots,v_r\w$. (To see this, we are free
to assume that $\,v_1\w,\dots,v_r\w$ are or\-tho\-nor\-mal.)
\item[{\rm(ii)}] If $\,V\nh$ is oriented,
$\,*(e_1\w\wedge\ldots\wedge e_r\w)=e_{r+1}\w\wedge\ldots\wedge e_n\w$
for the Hodge star %operator 
$\,*:V\nh^{\wedge r}\nh\to V\nh^{\wedge(n-r)}$ and 
any positive or\-tho\-nor\-mal basis $\,e_1\w,\dots,e_n\w$ of $\,V\nh$.
\end{enumerate}
\end{remark}
\begin{remark}\label{nwcon}In an $\,s\times(n-s)\,$ product
$\,n$-di\-men\-sion\-al manifold
$\,M\,$ with global product coordinates $\,x^i\nh,x^a$ (index ranges 
$\,1\le i\le s<a\le n$), let the component functions 
$\,g_{ij}\w,\jt^{ij}$ and $\,\vg_{\hskip-2.7ptij}^{\hs k}=
\vg_{\hskip-2.7ptji}^{\hs k}$ represent families of $\,(0,2)\,$ tensors,
$\,(2,0)\,$ tensors and tor\-sion-free
connections on the leaves of the in\-te\-gra\-ble distribution
spanned by the coordinate vector fields $\,\partial\nh_i\w$. Suppose that
each tensor is parallel relative to the corresponding connection on the leaf: 
$\,\partial\nh_i\w g_{jk}\w=\vg_{\hskip-2.7ptij}^{\hs l}g_{lk}\w
+\vg_{\hskip-2.7ptik}^{\hs l}g_{jl}\w$ and 
$\,\partial\nh_i\w\jt^{jk}=-\vg_{\hskip-2.7ptil}^{\hs j}\jt^{lk}\nh
-\vg_{\hskip-2.7ptil}^{\hs k}\jt^{jl}\nh$. Setting
$\,g_{\lambda\mu}\w=\jt^{\lambda\mu}\nh
=\vg_{\hskip-2.7pt\lambda\mu}^{\hs\nu}=0\,$ whenever at least one of the
indices $\,\lambda,\mu,\nu\in\{1,\dots,n\}\,$ is in the $\,a\,$ range, we
extend the above data to their analogs defined on $\,M\nh$,
namely, a $\,(0,2)\,$ tensor $\,g$, a $\,(2,0)\,$ tensor $\,\jt\,$ and a
tor\-sion-free connection $\,\nabla\nh$, in such a way that, obviously,
$\,\nabla\nh g=\nabla\nh\jt=0$.
\end{remark}

\section{The com\-plex-di\-ag\-o\-nal\-iz\-able case}\label{cd}
\setcounter{equation}{0}
By (\ref{nih}), for a $\,(1,1)\,$ tensor field $\,\jt\,$ and any $\,a\in\bbR$,
%a manifold has constant trace,
\begin{equation}\label{sni}
\jt\,\mathrm{\ \ \,and\ \ }\,\jt\hs\,-\,a\hs\mathrm{Id}\,\mathrm{\ \ have\
the\ same\ Nijen\-huis\ tensor.}
\end{equation}
To justify Remark~\ref{cdiag}, we invoke a result of
Bol\-si\-nov, Ko\-nya\-ev and Mat\-ve\-ev
\cite[Theorem 3.2]{bolsinov-konyaev-matveev}. It states that, if a 
$\,(1,1)\,$ tensor $\,\jt\,$ with $\,N\nnh=\hh0\,$ on a manifold $\,M$ has
complex characteristic roots of constant (algebraic) multiplicities, then
$\,M\,$ and $\,\jt\,$ are, locally, decomposed into Cartesian products 
of factor man\-i\-folds/ten\-sors with $\,N\nnh=\hh0$, where each factor
corresponds to (and realizes) a real eigen\-value function of $\,\jt$, or a
conjugate pair of its (non\-real) complex char\-ac\-ter\-is\-tic-root
functions.

Under the assumption made in Remark~\ref{cdiag}, the complex characteristic
roots of $\,\jt\,$ are all constant. Let the symbols $\,M\,$ and $\,\jt\,$
now stand for one of of factor man\-i\-folds/ten\-sors with $\,N\nnh=\hh0$,
mentioned above.

If the (constant) eigen\-value realized by this $\,\jt\,$ is real, our claim
follows: $\,\jt$ equals a constant multiple of $\,\mathrm{Id}$.

Otherwise, the characteristic roots realized by $\,\jt\,$ 
are $\,a\pm b\hh i$, with $\,a,b\in\bbR\,$ and $\,b\ne0$. Let
$\,J=b^{-\nnh1}\nh(\jt-a\hs\mathrm{Id})$. By (\ref{sni}), $\,J\,$ still has
$\,N\nnh=\hh0$, while the characteristic roots of $\,J\,$ (and hence those of
its complexification $\,\hat J\hs$) are $\,i\,$ and $\,-i$. As $\,\hat J\,$ is
di\-ag\-o\-nal\-iz\-able -- due to our assumption -- we get 
$\,\hat J\hh^2\nh=-\mathrm{Id}$. Thus, $\,J\hh^2\nh=-\mathrm{Id}$, and
the local constancy of $\,\jt\,$ follows from the New\-land\-er-Ni\-ren\-berg
theorem.

The more modest goal of establishing a weaker version of
Remark~\ref{cdiag}, with in\-te\-gra\-bi\-li\-ty of $\,\jt\,$ replacing its
local constancy, is easily achieved as follows. Rather than invoking the
New\-land\-er-Ni\-ren\-berg theorem, one shows that an al\-most-com\-plex 
structure $\,J\,$ or, more generally, a $\,(1,1)$ tensor $\,J\,$ with
$\,N\nnh=\hh0\,$ and $\,J\hh^2\nh=c\hs\mathrm{Id}$, where
$\,c\in\bbR\smallsetminus\{0\}$, has $\,\hat\nabla\nnh J=0\,$ 
for some tor\-sion-free connection $\,\hat\nabla\nh$.

We start from 
any tor\-sion-free connection $\,\nabla\nh$. By (\ref{tfr}),
$\,4\hh c\hh[B_v\w w-B_w\w v]=N(v,w)\,$ 
for $\,B_v\w w\,$ given by
%\[
$\,4\hh cB_v\w w=2J[\nabla_{\nnh\!v}\w\nnh J]w
+J[\nabla_{\nnh\!w}\w\nh J]v+[\nabla_{\!\!J\hn w}\w J]v$, that is, 
$\,4\hh cB_v\w w=(J[\nabla_{\nnh\!v}\w\nnh J]w
+J[\nabla_{\nnh\!w}\w\nh J]v)
+(J[\nabla_{\nnh\!v}\w\nnh J]w+[\nabla_{\!\!J\hn w}\w J]v)$. Thus, 
the vanishing of $\,N$ for $\,J\,$ amounts to symmetry of $\,B_v\w w\,$ 
in $\,v,w$, while $\,[J,B_v\w]=\nabla_{\nnh\!v}\w\nnh J\,$ since, 
$\,J^2$ being parallel, $\,\nabla_{\nnh\!v}\w\nnh J\,$ anti\-com\-mutes with 
$\,J$. This is precisely the relation $\,\hat\nabla\nnh J=0$ for the
tor\-sion-free connection $\,\hat\nabla\hs$ characterized by 
$\,\hat\nabla_{\nnh\!v}\w=\nabla_{\nnh\!v}\w\nnh+B_v\w$.

The assignment $\,\nabla\mapsto\hat\nabla\nh=\nabla\hn+B\,$ 
appearing above is a natural projection of the af\-fine space of all
tor\-sion-free connections on the %given %al\-most-com\-plex
manifold in question onto
the af\-fine sub\-space formed by those connections which make $\,J\,$
parallel.

The above conclusion is due to Clark and Bruck\-heim\-er 
\cite[Theorem 6]{clark-bruckheimer}. Our argument is a concise version of
one used, in a more general
situation, by Hernando, Reyes and Gadea 
\cite[Theorems 3.4 and 7.1]{hernando-reyes-gadea}.

%\section{Nijen\-huis tensors}\label{nt}

\section{Tensors of type $\,(1,1)$}\label{tt}
\setcounter{equation}{0}
%\begin{example}\label{acstr}
For the reader's convenience, we repeat here the definition,
% (\ref{nih}), %originally
due to Nijen\-huis \cite{nijenhuis}, of the Nijen\-huis tensor (\ref{nih})
associated with a $\,(1,1)$ tensor $\,\jt\,$ on a manifold:
\begin{equation}\label{nij}
N(v,w)\,=\,\jt[\jt v,w]\,+\,\jt[v,\jt w]\,
-\,[\jt v,\jt w]\,-\,\jt^2\hn[v,w]\hh.
\end{equation}
Applying $\,\jt^i$ to both sides, with any integer $\,i\ge0$, one obviously
obtains
\begin{equation}\label{thi}
\jt^{i+1}\hn(\jt[v,w]-[v,\jt w])\,
=\,\jt^i\hn(\jt[\jt v,w]-[\jt v,\jt w])\,-\,\jt^i[N(v,w)]\hh.
\end{equation}
Let $\,N\nnh=\hh0$. For any vector fields $\,v,w\,$ and integers $\,i,j\ge0$,
\begin{equation}\label{ift}
\mathrm{if\ }\,\jt^i\nh v=0\mathrm{,\ then\ }\,\jt^i\mathrm{\ also\
annihilates\ }\,\jt^j\nh[v,w]-[v,\jt^j\nh w]\hh.
\end{equation}
Namely, let $\,R(i,j)\,$ be the assertion (\ref{ift}), and $\,R(j)\,$ the
claim that $\,R(i,j)\,$ holds for all $\,i\ge1$. Now $\,R(1,1)\,$ 
is immediate from (\ref{nij}), while, assuming $\,R(i,1)$, and choosing any
$\,v\,$ with $\,0=\jt^{i+1}\nh v=\jt^i\nh\jt v$, we get, from
$\,R(i,1)\,$ for $\,\jt v\,$ (not $v$), zero on the right-hand side of
(\ref{thi}), and hence also on the left-hand side, which yields $\,R(i+1,1)\,$ 
and, by induction on $\,i$, establishes $\,R(i,1)\,$ for all $\,i\ge1$, that
is $\,R(1)$. If we now assume $\,R(j)$, and use any $\,i\ge1$, we see that 
$\,\jt^i\nh[v,\jt^{j+1}\nh w]=\jt^{i+1}\nh[v,\jt^j\nh w]\,$ when
$\,\jt^i\nh v=0\,$ (from $\,R(i,1)\,$
applied to $\,\jt^j\nh w\,$ rather than $\,w$), which in turn equals 
$\,\jt^{i+j+1}\nh[v,w]\,$ (due to $\,R(i+1,j)$, a consequence of $\,R(j)$). One
thus has $\,R(j+1)$, which completes the proof of (\ref{ift}).

When, again, $\,N\nnh=\hh0\,$ in (\ref{nij}) and $\,i,j,k\,$ are nonnegative
integers,
\begin{equation}\label{ziz}
\begin{array}{l}
\mathrm{a)}\hskip6pt[\bz\hh^i\nh,\bz\hh^i]\subseteq\bz\hh^i\nh,
\quad\mathrm{b)}\hskip6pt[\zz^i\nh,\bz^j]
\subseteq\zz^i\hskip-1.1pt+\hs\bz^j\nh,\quad
\mathrm{c)}\hskip6pt[\zz^i\nh,\zz^j]\subseteq\zz^{i+j}\nh,\\
\mathrm{d)}\hskip6pt[\zz^i\nh,\zz\hn^k]\subseteq\zz\hn^k\mathrm{\ \ if\
}\,\zz^i\mathrm{\ is\ in\-te\-gra\-ble\ and\ }\,k\ge i
\end{array}
\end{equation}
-- notation of (\ref{lbr}) -- with $\,\jt\,$ assumed algebraically constant. In
fact, (\ref{ziz}\hh-a), that is,
the in\-te\-gra\-bi\-li\-ty of each $\,\bz\hh^i\nh$,
follows via induction on $\,i$, from (\ref{nij}) with $\,N\nnh=\hh0\,$ and
with $\,v,w\,$ replaced by $\,\jt^i v,\jt^i w$. (The third Lie bracket in
(\ref{nij}) then is a section of $\,\bz\hh^{i+1}\nh$, once we assume that
$\,[\bz\hh^i\nh,\bz\hh^i]\subseteq\bz\hh^i\nh$.) For (\ref{ziz}\hh-b), note 
that the Lie bracket of sections $\,v\,$ of $\,\zz^i$ and $\,\jt^j\nh w\,$ 
of $\,\bz^j$ equals, by (\ref{ift}), $\,\jt^j\nh[v,w]$ plus a section of 
$\,\zz^i$. Finally, (\ref{ziz}\hh-c) and (\ref{ziz}\hh-d) are further
consequences of (\ref{ift}): given sections $\,v\,$ of $\,\zz^i$ and $\,w\,$ of
$\,\zz^j\nh$, (\ref{ift}) reads $\,\jt^{i+j}\nh[v,w]=0$, while (\ref{ift}) 
with $\,j=k-i$, for sections $\,v\,$ of $\,\zz^i$ and $\,w\,$ of
$\,\zz\hn^k$ 
(which makes $\,\jt^j\nh w\,$ and $\,[v,\jt^j\nh w]\,$ sections of $\,\zz^i$
-- the latter due to the assumed in\-te\-gra\-bi\-li\-ty of $\,\zz^i$),
yields $\,\jt^k\nh[v,w]=0$.%,as required.

The conclusion (\ref{ziz}\hh-a) is due to Bol\-si\-nov, Ko\-nya\-ev and
Mat\-ve\-ev \cite[Cor.\,2.5.]{bolsinov-konyaev-matveev}.

\section{Proof of Theorem~\ref{npjor}}\label{io}
\setcounter{equation}{0}
We use induction on the dimension, with the following induction step.
\begin{lemma}\label{indst}Let\/ $\,\jt\hs$ be an algebraically constant\/
$\,(1,1)\,$ tensor with\/ $\,N\nnh=\hh0\,$ in\/ {\rm(\ref{nih})} on a
manifold\/ $\,M\hs$ %of dimension\/ $\,n\ge2$,
such that the distribution\/ 
$\,\zz=\mathrm{Ker}\,\jt\hs$ is in\-te\-gra\-ble.
\begin{enumerate}
\item[(a)] $\jt$-im\-ages of\/ $\,\zz$-pro\-ject\-a\-ble local vector fields
in\/ $\,M\hs$ are themselves\/ $\,\zz$-pro\-ject\-a\-ble, so that\/ 
$\,\jt\hs$ naturally descends to a\/ $\,(1,1)\,$ tensor\/ $\,\hjt\,$ on 
any local leaf space\/ $\,\vs\nh\,$ of\/
$\,\zz$, and\/ $\,\hjt\nh\,$ also has\/ $\,N\nnh=\hh0$.
\item[(b)] If local vector fields\/ $\,v,w$, and hence also\/ $\,\jt v,\jt w$, 
are\/ $\,\zz$-pro\-ject\-a\-ble and the projected images of\/
$\,v,w,\jt v,\jt w\,$ all commute, then $\,[\jt v,\jt w]=0$.
\item[(c)] Nil\-po\-tency of\/ $\,\jt$, or in\-te\-gra\-bi\-li\-ty of the
distributions\/ $\,\mathrm{Ker}\,\jt^i$ for all\/ $\,i\ge1$,
implies the same property for\/ $\,\hjt$.
\end{enumerate}
\end{lemma}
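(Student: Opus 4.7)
The three parts can be tackled in order, with (a) as the foundation. For part (a), the plan is to apply criterion (\ref{prj}) to characterize $\zz$-projectability: given a $\zz$-projectable vector field $v$, one must verify that $[u,\jt v]$ is a section of $\zz$ for every section $u$ of $\zz$, i.e., $\jt[u,\jt v]=0$. Applying (\ref{ift}) with $i=j=1$ to the section $u$ (which satisfies $\jt u=0$) and an arbitrary vector field $v$, one obtains that $\jt$ annihilates $\jt[u,v]-[u,\jt v]$, that is, $\jt^2[u,v]=\jt[u,\jt v]$. Projectability of $v$ together with $u$ being a section of $\zz$ forces $[u,v]\in\zz$ via (\ref{prj}), hence $\jt[u,v]=0$ and thus $\jt[u,\jt v]=0$, as required. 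Since $\jt$ annihilates $\zz$, the assignment sending the projection $\hat v$ to the projection of $\jt v$ depends only on $\hat v$ and is manifestly $C^\infty$-linear, yielding a well-defined $(1,1)$ tensor $\hjt$ on the local leaf space $\vs$.

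To see that $\hjt$ has vanishing Nijenhuis tensor $\hat N$, I would invoke the standard fact that the Lie bracket of two $\zz$-projectable fields is itself $\zz$-projectable with projection equal to the bracket of the projections. Evaluating (\ref{nij}) on projectable fields $v,w$ with projections $\hat v,\hat w$, each of the four terms is then $\zz$-projectable with projection equal to the corresponding term in the Nijenhuis formula for $\hjt(\hat v,\hat w)$; hence $\hat N(\hat v,\hat w)$ is the projection of $N(v,w)=0$. For part (b), the hypothesis that the projections of $v,w,\jt v,\jt w$ commute pairwise, combined with projectability, means via (\ref{prj}) that each of the four Lie brackets $[v,w],[\jt v,w],[v,\jt w],[\jt v,\jt w]$ is a section of $\zz=\mathrm{Ker}\,\jt$. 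Substituting into (\ref{nij}) with $N=0$, the three terms $\jt[\jt v,w]$, $\jt[v,\jt w]$ and $\jt^2[v,w]$ all vanish, leaving exactly $[\jt v,\jt w]=0$.

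For part (c), if $\jt^k=0$ on $M$, then iterating (a) shows $\jt^k$ projects to $\hjt^k$, which must therefore be zero on $\vs$. For the integrability claim, note first that (\ref{ziz}\hh-d) applied with $i=1$ and any $k\ge1$, using the assumed integrability of $\zz=\zz^1$, gives $[\zz,\zz^k]\subseteq\zz^k$. Since $\zz\subseteq\zz^k=\mathrm{Ker}\,\jt^k$, Lemma~\ref{prdis}(c) yields $\zz$-projectability of each $\mathrm{Ker}\,\jt^k$; its projection is clearly $\mathrm{Ker}\,\hjt^k$, whose integrability follows from that of $\mathrm{Ker}\,\jt^k$ via (\ref{pri}). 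The only substantive technical point is the projectability claim in part (a), where the Nijenhuis hypothesis is used in an essential way through (\ref{ift}); all remaining assertions reduce to routine transport of structure along the local submersion onto the leaf space.
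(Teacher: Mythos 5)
Your proof is correct and follows essentially the same route as the paper's: part (a) rests on the same computation (your appeal to (\ref{ift}) with $i=j=1$ is exactly the $N\nnh=\hh0$ identity $\jt[u,\jt v]=\jt^2[u,v]$ that the paper reads off directly from (\ref{nih})), and parts (b) and (c) use the identical reductions via (\ref{prj}), (\ref{ziz}-d), Lemma~\ref{prdis}(c) and (\ref{pri}). One harmless indexing slip in (c): the $\zz$-projection of $\mathrm{Ker}\,\jt^k$ is $\mathrm{Ker}\,\hjt\hs^{k-1}$ rather than $\mathrm{Ker}\,\hjt\hs^{k}$, since for a projectable $v$ one has $\hjt\hs^{k-1}\hat v=0$ precisely when $\jt^{k-1}v$ is vertical, i.e., $\jt^k v=0$; letting $k$ range over all integers $\ge1$ still yields integrability of every $\mathrm{Ker}\,\hjt\hs^{i}$, so the conclusion is unaffected.
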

\begin{proof}Applying (\ref{nih}) to $\,v\,$ with $\,\jt v=0\,$ and $\,w\,$
pro\-ject\-a\-ble along $\,\zz$, we obtain $\,\jt[v,\jt w]=0$, as 
$\,\jt[v,w]$, and hence $\,\jt^2\hn[v,w]$, vanishes due to
pro\-ject\-a\-bil\-i\-ty of $\,w\,$ and (\ref{prj}). By (\ref{prj}), this
proves the first part of (a), 
with an obvious definition of $\,\hjt$. Evaluating (\ref{nih}) on
pro\-ject\-a\-ble vector fields, or applying $\,\jt\,$ to them, we get
$\,N\nnh=\hh0\,$ for $\,\hjt\,$ or, respectively, the claim about
nil\-po\-tency in (c).

Under the assumptions of (b), $\,[\jt v,w],[v,\jt w]\,$ and $\,\jt[v,w]\,$
are -- by (a) -- pro\-ject\-a\-ble onto $\,0$, which makes them sections
of $\,\zz=\mathrm{Ker}\,\jt$, so that (\ref{nih}) with
$\,N\nnh=\hh0\,$ yields $\,[\jt v,\jt w]=\jt([\jt v,w]+[v,\jt w]-\jt[v,w])
=0$.

If the distributions $\,\zz^i\nh=\mathrm{Ker}\,\jt^i$ are all
in\-te\-gra\-ble, pro\-ject\-a\-ble vector fields that project onto sections
of $\,\mathrm{Ker}\,\hs\hjt\hs^i$ span the distribution
$\,\zz^{i+1}$ (the $\,\jt^i\nh$-pre\-im\-age of the vertical
distribution $\,\zz$). Pro\-ject\-a\-bil\-i\-ty of each $\,\zz^{i+1}\nh$,
immediate from that of $\,\jt$, or from (\ref{ziz}\hh-d) and 
Lemma~\ref{prdis}(c), combined with (\ref{pri}), proves (c).
\end{proof}
The assertion $\,N\nnh=\hh0\,$ in (a) is also  a special case of 
\cite[Prop.\,2.4]{bolsinov-konyaev-matveev}.
\begin{proof}[Proof of Theorem~\ref{npjor}]As the implications \hbox{(ii)
$\implies$ (iii) $\implies$ (iv)
$\implies$ (i)} are obvious -- the last two from from (\ref{imp}), (\ref{iii})
and (\ref{tfr}) -- we now just proceed to show that (ii) holds whenever (i)
does,
using induction on $\,n\ge1$. The case $\,n=1\,$ being trivial,
%settled by Lemma~\ref{sqrzr},
we now fix $\,n>1$ and assume that (i) implies (ii) in dimensions less
than $\,n$, while (i) is satisfied on an $\,n$-man\-i\-fold $\,M\nh$, with 
$\,\jt\ne0$. Using $\,\hjt\,$ and a 
local leaf space $\,\vs\,$ arising from Lemma~\ref{indst}(a), and replacing 
$\,M\,$ by a suitable neighborhood of a given point, we get a 
bundle projection $\,\pi:M\to\vs\,$ with the vertical distribution
$\,\zz=\mathrm{Ker}\,\jt$, while (i), and hence (ii), holds for $\,\hjt$, on
$\,\vs$, since $\,\dim\vs<n$. The resulting commuting Jor\-dan-form frame
for $\,\hjt\,$ is split into $\,\hjt${\it-or\-bits\/} 
$\,u_1\w,\dots,u\hn_d\w$ of various {\it lengths\/} $\,d\ge1$, with the {\it
initial\/} vector (field) $\,u_1\w$ lying in $\,\mathrm{Ker}\,\hs\hjt$,
the {\it final\/} vector $\,u\hn_d\w$ outside of $\,\mathrm{Im}\,\hs\hjt$, and
$\,u_i\w=\hjt\hs^{d-i}\hs u\hn_d\w$ for $\,i=1,\dots,d$.

We now associate with every given 
$\,\hjt\hs$-or\-bit $\,u_1\w,\dots,u\hn_d\w$ the corresponding $\,\jt$-or\-bit
$\,v_0\w,v_1\w,\dots,v\nh_d\w$ of length $\,d+1\,$ in $\,M\nh$. First, we
choose 
each final vector field $\,v\nh_d\w$, on $\,M\nh$, so that it projects onto 
$\,u\hn_d\w$ under $\,\pi$, and set $\,v_i\w=\jt^{d-i}\nh v\nh_d\w$, 
$\,i=0,\dots,d-1$. We call $\,v_0\w$ the {\it pre-in\-i\-tial\/} vector. 
Our $\,v\nh_d\w$ is only unique up to adding sections
of $\,\zz=\mathrm{Ker}\,\jt\,$ (and will be modified later); this is also the
obvious reason why {\it the
non\-fi\-nal vectors\/ $\,v_0\w,v_1\w,\dots,v\nh_{d-1}\w$ are uniquely
determined}. Due to $\,\pi$-pro\-ject\-a\-bil\-i\-ty 
of $\,\jt\,$ onto $\,\hjt\,$ in Lemma~\ref{indst}(a), the resulting
$\,\jt$-or\-bit, with the pre-in\-i\-tial vector removed, projects onto the
original $\,\hjt\hs$-or\-bit, while the pre-in\-i\-tial vectors are sections of
$\,\zz\cap\hs\mathrm{Im}\,\jt$, projecting onto zero. Also, by
Lemma~\ref{indst}(b), {\it the non\-fi\-nal vectors from the union of all
the\/ $\,\jt\hn$-or\-bits commute with one another\/} (which includes the
pre-in\-i\-tial ones). Denoting by $\,k\,$ the total number of these
commuting vectors, we see that they generate
\begin{equation}\label{fre}
\mathrm{a\ free\ local\ action\ of\ }\,\bbR\hn^k\mathrm{\ in\ }\,M.
\end{equation}
The union of all the $\,\jt$-or\-bits forms a linearly independent system
at every point: the non-pre-in\-i\-tial ones project onto a frame in $\,\vs$,
which makes them linearly independent over $\,\zz=\mathrm{Ker}\,\jt\,$
(meaning linear independence of their images in %the quotient bundle
$\,T\nh M/\zz$), while the pre-in\-i\-tial ones, lying in $\,\zz$, are
linearly independent, being the $\,\jt\hh$-im\-ages of the initial vectors, 
linearly independent over $\,\zz$.

Next, we modify -- as announced above -- the final vectors $\,e_a\w$ chosen in
$\,M\nh$, and augment the union of all the $\,\jt$-or\-bits with some sections
$\,e\nh_\lambda\w$, so as to obtain a commuting frame in $\,M\,$ which,
automatically, will be a Jor\-dan-form frame for $\,\hjt$. (The indices
$\,a,\lambda\,$ have some appropriate ranges.) To this end, we identify
$\,M\nh$, locally, with a Cartesian product of a horizontal factor (our leaf
space $\,\vs$) and a vertical factor, tangent to $\,\zz$. Our $\,e_a\w$ and
$\,e\nh_\lambda\w$ are suitable systems of commuting vector fields on the
factor manifolds, trivially extended to vector fields in $\,M\,$ (which causes
$\,e_a\w$ to commute with $\,e\nh_\lambda\w$). In a first step, for $\,e_a\w$
we choose the final vectors of our Jor\-dan-form frame for $\,\hjt$, and for 
$\,e\nh_\lambda\w$ some vertical coordinate vector fields chosen, locally, 
so as to be linearly independent over $\,\zz\cap\hs\mathrm{Im}\,\jt\,$ and
represent, under the quo\-tient-bun\-dle projection, a local trivialization
of $\,\zz/(\zz\cap\hs\mathrm{Im}\,\jt)$. Let
$\,Q\,$ now be one leaf of the in\-te\-gra\-ble distribution spanned by 
all $\,e_a\w$ and $\,e\nh_\lambda\w$. Thus, $\,Q\,$ has co\-dimen\-sion $\,k\,$
in $\,M\,$ and is transverse to the orbits of the local free action
(\ref{fre}). We now modify all $\,e_a\w$ and $\,e\nh_\lambda\w$ further, by
using the action (\ref{fre}) to spread their restrictions to $\,Q\,$ from
$\,Q\,$ to a neighborhood of $\,Q\,$ in $\,M\nh$. Due to equi\-var\-i\-ance
of $\,\pi\,$ relative to the action (\ref{fre}) and the analogous free action
in $\,\vs\,$ generated by the non\-fi\-nal vectors from the union of all 
the $\,\hjt\hs$-or\-bits, and the invariance of the final vectors in $\,\vs\,$
under the latter action, the modified $\,e_a\w$ still project onto the final
vectors
(and $\,e\nh_\lambda\w$ onto $\,0$, as the action leaves $\,\zz\,$ invariant).
Finally, $\,e_a\w$ and $\,e\nh_\lambda\w$ commute both with the non\-fi\-nal
vectors from the union of the $\,\jt$-or\-bits, and with one another: the
former follows from their $\,\bbR\hn^k\nh$-in\-var\-i\-ance, the latter since
their restrictions to $\,Q\,$ commute. This completes the proof.
\end{proof}

\section{Algebraic constancy and connections}\label{ac}
\setcounter{equation}{0}
Given a real vector bundle $\,E\hs$ of rank $\,k\,$ over a manifold $\,M\,$
and integers $\,p,q\ge0$, we say that a smooth section $\,\jt\,$ of
$\,E^{\otimes p}\nh\otimes[E^*]^{\otimes q}$ is {\it algebraically constant\/}
when, for any $\,x,y\in M\nh$, some linear iso\-mor\-phism
$\,E\nh_x\w\to E\nh_y\w$ sends $\,\jt\nnh_x\w$ to $\,\jt\nnh_y\w$.
In this case, fixing $\,z\in M\,$ and an ordered basis
$\,\mathbf{e}=(e\hn_1,\dots,e\nh_k)\,$ of $\,E\nh_z\w$, let us
\begin{equation}\label{den}
\mathrm{denote\ by\ }\,\mathbf{e}\hs\jt\nnh_z\w\mathrm{\ the\ system\
of\ components\ of\ }\,\jt\nnh_z\w\mathrm{\ in\ the\ basis\
}\,\mathbf{e}\hh,   
\end{equation}
that is, the $\,(p,q)\,$ tensor in $\,\bbR\nh^k$ arising as the image of 
$\,\jt\nnh_z\w$ under the linear iso\-mor\-phism 
$\,E\nh_z\w\nh\to\bbR\nh^k$ associated with $\,\mathbf{e}$. We now 
define two objects, the first being %from %The first is
the matrix group $\,G\subseteq\mathrm{GL}\hh(k,\bbR)\,$
formed by all transition matrices between $\,\mathbf{e}\,$ and all
ordered bases $\,\bar{\mathbf{e}}\,$ of $\,E\nh_z\w$ such that
$\,\bar{\mathbf{e}}\hs\jt\nnh_z\w=\mathbf{e}\hs\jt\nnh_z\w$. In
other words, $\,G\,$ is the isotropy group of
$\,\mathbf{e}\hs\jt\nnh_z\w$ for
the obvious action of $\,\mathrm{GL}\hh(k,\bbR)\,$ on $\,(p,q)$ tensors in
$\,\bbR\nh^k\nh$.

The second one, a $\,G$-prin\-ci\-pal bundle $\,P\hs$ over $\,M\nh$, is
contained in the $\,\mathrm{GL}\hh(k,\bbR)$-prin\-ci\-pal bundle $\,Q\,$ over 
$\,M\,$ naturally associated with $\,E\nh$, and the fibre of $\,P\hs$ over any
$\,x\in M\,$ consists of the ordered bases $\,\tilde{\mathbf{e}}\,$ of
$\,E\nh_x\w$ having
$\,\tilde{\mathbf{e}}\hs\jt\nnh_x\w=\mathbf{e}\hs\jt\nnh_z\w$.

Smoothness of $\,P\hs$ follows since $\,P\hs$ is the pre\-im\-age of
the point $\,\mathbf{e}\hs\jt\nnh_z\w$ under the submersion
$\,\varPhi:Q\to\varSigma\,$ sending any ordered basis $\,\hat{\mathbf{e}}\,$ of
$\,E\nh_x\w$, at any $\,x\in M\nh$, to $\,\hat{\mathbf{e}}\hs\jt\nnh_x\w$,
with $\,\varSigma\,$ denoting the $\,\mathrm{GL}\hh(k,\bbR)$-or\-bit
of $\,\mathbf{e}\hs\jt\nnh_z\w$ viewed, again, as a $\,(p,q)\,$ tensor in 
$\,\bbR\nh^k\nh$. The submersion property of $\,\varPhi\,$ is obvious: even
the restriction of $\,\varPhi\,$ to any fibre $\,Q\nh_x\w$ of $\,Q\,$ is a
submersion, dif\-feo\-mor\-phic\-al\-ly equivalent to the projection
$\,\mathrm{GL}\hh(k,\bbR)\to\mathrm{GL}\hh(k,\bbR)/G$.

Thus, a smooth section $\,\jt\,$ of
$\,E^{\otimes p}\nh\otimes[E^*]^{\otimes q}$ is parallel relative to some
linear connection
$\,\nabla\hs$ in $\,E\,$ if and only if it is algebraically constant 
\cite[Theorems\,1-2]{kurita}, the `only if' (or, `if') claim being
obvious since $\,M\,$ is assumed connected
or, respectively, since $\,\nabla$ induced by any principal $\,G$-connection
in $\,P\hs$ clearly makes $\,\jt$ parallel. Such connections are precisely
the linear connections in $\,E\hs$ characterized by vanishing of their {\it
inner torsion\/} in the sense of \cite[Sect.\,5]{piccione-tausk}.
\begin{remark}\label{gstrc}Our construction depends on the choice of
$\,z\in M\,$ and an ordered basis $\,\mathbf{e}\,$ of $\,E\nh_z\w$. However,
different choices lead to equi\-var\-i\-ant\-ly equivalent objects. The case
of importance to us is $\,E=T\nh M\nh$, where $\,P\hs$ is the
$\,G$-struc\-ture associated with the given algebraically constant
$\,(p,q)\,$ tensor $\,\jt$. When $\,(p,q)=(1,1)\,$ and $\,\jt$ is
nil\-po\-tent, {\it we will always use\/ $\,z\,$ and\/ $\,\mathbf{e}\,$ 
realizing the Jor\-dan normal form\/ $\,d_1\w\nnh\ldots d_m\w$ of\/} $\,\jt$, 
defined as in (\ref{wdi}).
\end{remark}

\section{The Lie brackets of a local Jor\-dan frame}\label{lb}
\setcounter{equation}{0}
Recall our convention (\ref{wdi}) about representing the Jor\-dan normal
forms of nil\-po\-tent $\,(1,1)\,$ tensors in dimension $\,n\,$ as weakly
decreasing strings $\,d_1\w\nnh\ldots d_m\w$ of positive integers, so
that $\,d_1\w+\ldots+d_m\w=n$, and $\,\jt=0\,$ has the Jor\-dan normal form 
$\,1\ldots1$, each $\,1\,$ being the $\,1\nh\times\hskip-.9pt1\,$ block matrix
$\,[\hh0\hskip.2pt]$, while $\,n\,$ is the sin\-gle\hh-block Jor\-dan normal
form of a {\it generic\/} nil\-po\-tent $\,(1,1)\,$ tensor in dimension $\,n$. 
%cf.\ \cite[Sect.\,5]{derdzinski-terek}.
The Jor\-dan normal form 
$\,2\ldots2\,$ characterizes the case
$\,\mathrm{Ker}\,\jt=\mathrm{Im}\,\jt$. 

If an algebraically constant nil\-po\-tent $\,(1,1)\,$ tensor $\,\jt\,$ on an 
$\,n$-man\-i\-fold has the Jor\-dan normal form 
$\,d_1\w\nnh\ldots d_m\w$, then each sub\-bun\-dle
$\,\mathrm{Ker}\,\jt^i$ clearly has the fibre dimension
$\,\mathrm{min}(i,d_1\w)+\ldots+\mathrm{min}(i,d_m\w)$, and hence
\begin{equation}\label{rkt}
\mathrm{rank}\,\jt^i\hs=\,\,n\,-\,\mathrm{min}(i,d_1\w)\,-\,\ldots\,
-\,\mathrm{min}(i,d_m\w)\hh.
\end{equation}
Let us fix an algebraically constant nil\-po\-tent $\,(1,1)\,$ tensor 
$\,\jt\,$ on an $\,n$-man\-i\-fold $\,M\,$ and a local frame 
field realizing the Jor\-dan 
normal form $\,d_1\w\nnh\ldots d_m\w$ of $\,\jt$. (See %(\ref{wdi}) and
Remark~\ref{gstrc}.)
%that is, a local section of the $\,G$-struc\-ture $\,P\hs$ on $\,M\,$ chosen
%as in Sect.\,\ref{ac}.
We focus on 
three (not necessarily distinct) $\,\jt$-or\-bits%, or
\begin{equation}\label{fbl}
%\mathrm{frame\ blocks\ }\,\,
(e_1\w,\dots,e_p\w),\,(\tilde
e_1\w,\dots,\tilde e_q\w), \,(\hat e_1\w,\dots,\hat e_r\w)\hh,
\end{equation}
by which we mean portions of our frame
field corresponding to three of the entries $\,d_1\w,\ldots,d_m\w$. 
Setting $\,e\hn_i\w=\tilde e\nh_j\w=\hat e\hn_k\w=0\,$ for nonpositive
integers $\,i,j,k$, we obtain 
$\,(\jt e\hn_i\w,\jt\tilde e\nh_j\w,\jt\hat e\hn_k\w)
=(e\hn_{i-1}\w,\tilde e\nh_{j-1}\w,\hat e\hn_{k-1}\w)\,$ for all 
integers $\,i,j,k\,$ not exceeding, respectively, $\,p,q\,$ or $\,r$. Finally, 
we denote by $\,C_{i,j}^k$ the coefficient of $\,\hat e\hn_k\w$ in
the expansion of the Lie bracket $\,[\hs e\hn_i\w,\tilde e\nh_j\w]\,$ as a
(functional) combination of our fixed frame, and also set 
$\,C_{i,j}^k=0\,$ if $\,k>r\,$ or one of $\,i,j,k\,$ is nonpositive;
thus $\,C_{i,j}^k$ is well defined for integers $\,i,j,k\,$ with
$\,i\le p\,$ and $\,j\le q$. Now $\,N\nnh=\hh0\,$ in (\ref{nih}) if and only if
\begin{equation}\label{cij}
C_{i,j}^k\hh+\,C_{i-1,j-1}^{k-2}\,
=\,\hs C_{i-1,j}^{k-1}\hh+\,C_{i,j-1}^{k-1}\,\mathrm{\ whenever\ }\,k\ge3\hh,\,\,i\le p\,\mathrm{\
and\ }\,j\le q\hh,
\end{equation}
with $\,C_{i,j}^k=0\,$ for $\,k>r$. Namely,
$\,N(e\hn_i\w,\tilde e\nh_j\w)\,$ evaluated from (\ref{nih}), and then 
projected onto the span of $\,(\hat e_1\w,\dots,\hat e_r\w)$, obviously equals
\begin{equation}\label{cim}
(C_{i-1,j}^k\nnh+C_{i,j-1}^k)\hat e\hn_{k-1}\w\nh
-\hs C_{i-1,j-1}^k\hat e\hn_k\w\hn-\hs C_{i,j}^k\hat e\hn_{k-2}\w\mathrm{\ \
summed\ over\ all\ }\,k\le r.
\end{equation}
The vanishing of the terms involving $\,\hat e_r\w$ (or
$\,\hat e_{r-1}\w$, if $\,r\ge2$) means that $\,C_{i-1,j-1}^{\hs r}\nh=\hn0$
(or, respectively, $\,C_{i-1,j-1}^{\hs r-1}\nh=C_{i-1,j}^{\hs r}\nh
+\hs C_{i,j-1}^{\hs r}$),
both of which are special cases of (\ref{cij}), with $\,k\in\{r+1,r+2\}$.
Leaving these terms aside, we see
that the equality in (\ref{cij}) multiplied by
$\,\hat e\hn_{k-2}\w$ follows if we shift the summation index from $\,k\,$ 
or $\hs k-1\hs$ to $\,k-2\,$
in the first two terms of (\ref{cim}),  
which yields (\ref{cij}) as
$\,\hat e\hn_{k-2}\w=0\,$ unless $\,k\ge3$. 
In terms of $\,E_{i,j}^s=C_{i,j}^{\hs i+j-s+1}\nh$, or
$\,C_{i,j}^k=E_{i,j}^{\hs i+j-k+1}\nh$, (\ref{cij}) can be rewritten as
\begin{equation}\label{eij}
E_{i,j}^s\hh+\hs E_{i-1,j-1}^s\hs=\,E_{i-1,j}^s\hs+\,E_{i,j-1}^s\,\mathrm{\
if\ }\,i+j\ge s+2\hh,\,\,i\le p\,\mathrm{\ and\
}\,j\le q\hh,
\end{equation}
while $\,E_{i,j}^s=0\,$ whenever $\,i+j\ge s+r$. The reason why we prefer to
switch from 
the integer variables $\,i,j,k\,$ to $\,i,j,s\,$ with $\,s=i+j-k+1$, or
$\,k=i+j-s+1$, is that %the recursion formula 
(\ref{eij}) uses a fixed value of $\,s$,
allowing us to treat different values of $\,s\,$ as completely unrelated.
Our conclusions may be summarized as follows.
\begin{lemma}\label{noiff}Given\/ $\,\jt\,$ and the frame field as above, 
the Nijen\-huis tensor\/ {\rm(\ref{nih})} vanishes if and only if, for any
ordered triple of not necessarily distinct\/ $\,\jt$-or\-bits\/
{\rm(\ref{fbl})}, one has\/ {\rm(\ref{eij})} along with
\begin{equation}\label{bdr}
\begin{array}{l}
E_{i,j}^s\hs=\hs\,0\,\,\mathrm{\ if\ }\,i+j\hs<s\mathrm{,\ or\
}\,i+j\hs\ge s+r\mathrm{,\
or\ }\,i\le0\mathrm{,\ or\ }\,j\le0\mathrm{,\ and}\\
\mathrm{our\ }\,E_{i,j}^s\mathrm{\ are\ defined\ for\ all\
}\hn\,i,j,s\in\bbZ\nh\,\mathrm{\ such\ 
that\ }\,i\le p\,\mathrm{\ and\ }\,j\le q\hh.
\end{array}
\end{equation}
\end{lemma}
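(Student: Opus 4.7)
The plan is to reduce the tensorial identity $N\equiv0$ to a finite system of scalar equations on the structure coefficients of the Jordan frame, and then to carry out the change of variables $s=i+j-k+1$ which rewrites those equations in the form claimed. Since $N$ in (\ref{nih}) is $C^\infty$-bi\-lin\-ear and skew-sym\-met\-ric in its two arguments, $N\equiv0$ is equivalent to $N(e_i,\tilde e_j)=0$ for every choice of two (possibly equal) $\jt$-or\-bits $(e_1,\dots,e_p)$ and $(\tilde e_1,\dots,\tilde e_q)$ in the frame and for all admissible indices $i,j$. Moreover, expanding the value of $N(e_i,\tilde e_j)$ in the fixed frame and collecting terms by orbits, it suffices to check the vanishing of the component of $N(e_i,\tilde e_j)$ in the span of each third orbit $(\hat e_1,\dots,\hat e_r)$.

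Next, I would expand each of the four Lie brackets appearing in (\ref{nih}) using the definition of $C_{i,j}^k$ and the identities $\jt e_i=e_{i-1}$, $\jt\tilde e_j=\tilde e_{j-1}$, $\jt\hat e_k=\hat e_{k-1}$, with the convention that these quantities are $0$ for nonpositive indices. The bracket $[\jt e_i,\tilde e_j]$ contributes $C_{i-1,j}^k\hat e_k$, which becomes $C_{i-1,j}^k\hat e_{k-1}$ after $\jt$ is applied, and similarly for $[e_i,\jt\tilde e_j]$; the terms $[\jt e_i,\jt\tilde e_j]$ and $\jt^2[e_i,\tilde e_j]$ yield $-C_{i-1,j-1}^k\hat e_k$ and $-C_{i,j}^k\hat e_{k-2}$, respectively. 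Summing and reading the coefficient of each $\hat e_{k-2}$ after an index shift produces exactly (\ref{cij}) for $k\ge3$. The coefficients of $\hat e_r$ and $\hat e_{r-1}$ (where the shift would exceed the range) give the boundary relations $C_{i-1,j-1}^r=0$ and $C_{i-1,j-1}^{r-1}=C_{i-1,j}^r+C_{i,j-1}^r$, which are precisely the $k=r+1$ and $k=r+2$ instances of (\ref{cij}) together with the convention $C_{i,j}^k=0$ for $k>r$.

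Finally, substituting $E_{i,j}^s=C_{i,j}^{i+j-s+1}$ transforms (\ref{cij}) into (\ref{eij}); the virtue of this substitution is that all four $E$-terms share the same upper index $s$, so the system decouples across distinct values of $s$. The constraints of (\ref{bdr}) then follow directly from the prior conventions on $C_{i,j}^k$: vanishing whenever the lower indices $i,j$ are nonpositive is preserved, $k\le0$ corresponds to $i+j<s$, and $k>r$ corresponds to $i+j\ge s+r$, while the admissibility bounds $i\le p$ and $j\le q$ come from the fact that $e_i$ and $\tilde e_j$ only belong to their orbits in those ranges. The only real obstacle is careful bookkeeping of the boundary cases, since the content of the computation is entirely routine once the index substitution is made.
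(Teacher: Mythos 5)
Your proposal is correct and is essentially the paper's own argument: the paper performs exactly this frame-by-frame expansion of $N(e_i,\tilde e_j)$ projected onto the span of $(\hat e_1,\dots,\hat e_r)$ in the text preceding the lemma (obtaining (\ref{cim}), extracting (\ref{cij}) by the same index shift with the same $k=r+1,r+2$ boundary cases, and then substituting $s=i+j-k+1$), so its formal proof of the lemma is just a reference back to that computation plus the remark that (\ref{bdr}) restates the conventions on $C_{i,j}^k$. Your bookkeeping of the boundary terms and of the translation of $k\le0$, $k>r$, $i\le0$, $j\le0$ into the conditions of (\ref{bdr}) matches the paper's.
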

\begin{proof}We already saw that (\ref{eij}) is equivalent to (\ref{cij}),
while (\ref{bdr}) is clearly nothing else than the obvious boundary
conditions ($C_{i,j}^k\hs=0\,$ if $\,k\le0$, or $\,k>r$, or $\,i\le0$, or
$\,j\le0$) coupled with our convention about when $\,C_{i,j}^k$ makes sense.
\end{proof}
Next, $\,\zz^l=\mathrm{Ker}\,\jt^l$ is in\-te\-gra\-ble (which may or may
not be the case) if 
and only if $\,C_{i,j}^k\hs=0\,$ whenever $\,i,j\le l<k$, that is,
\begin{equation}\label{inl}
E_{i,j}^s\hs=\,0\,\mathrm{\ for\ all\ }\,i,j,s\mathrm{ \ with\
}\,i,j\le l\mathrm{ \ and\ }\,i+j\ge s+l\hh,
\end{equation}
and for all ordered triples of (not necessarily distinct) $\,\jt$-or\-bits 
(\ref{fbl}). With the last clause repeated, the in\-te\-gra\-bi\-li\-ty of
$\,\zz^l=\mathrm{Ker}\,\jt^l$ for {\it all\/} $\,l\ge0\,$ clearly amounts to
%the condition
\begin{equation}\label{ali}
E_{i,j}^s\hs=\,0\,\mathrm{\ whenever\ }\,i,j\ge s\hh,
\end{equation}
since the condition $\,i,j\le i+j-s\,$ is nothing else than $\,i,j\ge s$.
\begin{remark}\label{eijse}The equality in (\ref{eij}) obviously holds, for
all $\,i,j,s\in\bbZ$, if $\,E_{i,j}^s$ is a function of 
$\,i\,$ alone, or of $\,j\,$ alone, or equals $\,i+j\,$ plus a function of
$\,s$. 
\end{remark}

\renewcommand{\theprop}{\thesection.\arabic{prop}}
\setcounter{prop}{0}
\section{Proof of Theorem~\ref{cntrl}: the necessity of
{\rm(\ref{csd})}}\label{nc}
\setcounter{equation}{0}
For the Jor\-dan normal form of an algebraically constant nil\-po\-tent
$\,(1,1)\,$ tensor, {\it not\/} being of type (\ref{csd}) clearly
means that it
\begin{equation}\label{nbg}
\mathrm{contains\ three\ different\ Jor\-dan\ blocks\ of\ lengths\
}\,p,q,r\,\mathrm{\ with\ }\,p\le q<r\hh.
\end{equation}
\begin{prop}\label{ctrol}In any dimension\/ $\,n\ge1$, the condition\/
{\rm(\ref{nbg})}, imposed on the Jor\-dan normal form of an algebraically
constant nil\-po\-tent\/ $\,(1,1)$ tensor\/ $\,\jt$, implies that
the algebraic type of\/ $\,\jt\/$ is not controlled by the Nijen\-huis tensor\/
{\rm(\ref{nih})}. More precisely, $\,\jt\,$ can be realized as a
left-in\-var\-i\-ant\/ $\,(1,1)\,$ tensor on a Lie group, in such a way that\/
$\,N\nnh=\hh0$, but\/ $\,\mathrm{Ker}\,\jt\hh^p$ is non\-in\-te\-gra\-ble for
some integer\/ $\,p\ge1$. One may choose\/ $\,p\,$ to be the shortest block
length in the Jor\-dan normal form of\/ $\,\jt$.
\end{prop}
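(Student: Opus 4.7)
The plan is to construct an explicit 2-step nilpotent Lie algebra $\mathfrak{g}$ carrying a nilpotent endomorphism $\jt$ of the prescribed Jordan type, and realize $\jt$ as a left-invariant $(1,1)$-tensor on the simply connected Lie group $G$ with Lie algebra $\mathfrak{g}$; it will then follow that $N=0$ on $G$ while $\mathrm{Ker}\,\jt^p$ is non-integrable. By (\ref{nbg}), select three distinct Jordan blocks of lengths $p\le q<r$. The failure of (\ref{csd}) admits two subcases --- three or more distinct block lengths, or exactly two distinct lengths with the shorter appearing at least twice --- and in either case $p$ may be chosen as the shortest block length of $\jt$. Label the basis vectors of the three selected blocks $e_1,\dots,e_p$, $\tilde e_1,\dots,\tilde e_q$, $\hat e_1,\dots,\hat e_r$, with $\jt$ acting as the standard shift on each (so $\jt e_i=e_{i-1}$ and similarly, with $e_0=\tilde e_0=\hat e_0=0$), and take commuting bases for all remaining Jordan blocks.

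On $\mathfrak{g}$, declare the only nonzero brackets (up to skew-sym\-me\-try) to be
\[
[e_i,\tilde e_j]\;=\;\hat e_{i+j-p+1},\qquad 1\le i\le p,\;\,1\le j\le q,\;\,i+j\ge p,
\]
noting that the subscript $i+j-p+1$ lies in $\{1,\dots,q+1\}\subseteq\{1,\dots,r\}$ precisely because $q<r$. Since $[\mathfrak{g},\mathfrak{g}]\subseteq\mathrm{span}(\hat e_1,\dots,\hat e_r)$ is central, $\mathfrak{g}$ is 2-step nilpotent, Jacobi is automatic, and $G$ is diffeomorphic to $\mathbb{R}^n$; extend $\jt$ left-invariantly to $G$.

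To verify that $N=0$, apply Lemma~\ref{noiff}. For ordered triples of $\jt$-orbits other than the distinguished one, all structural brackets vanish and (\ref{eij})--(\ref{bdr}) hold trivially. For the distinguished triple, setting $E_{i,j}^s=C_{i,j}^{i+j-s+1}$ yields $E^s\equiv 0$ for $s\ne p$, while $E_{i,j}^p$ is the indicator of $\{1\le i\le p,\,1\le j\le q,\,i+j\ge p\}$. A direct calculation shows that in the regime $i+j\ge p+2$, $i\le p$, $j\le q$, the mixed difference
\[
E_{i,j}^p+E_{i-1,j-1}^p-E_{i-1,j}^p-E_{i,j-1}^p
\]
vanishes identically, the sole potentially exceptional point $(i,j)=(1,1)$ lying outside the regime since $p\ge 1$. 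The boundary conditions (\ref{bdr}) are immediate: the upper support bound $i+j<s+r$ is automatic from $i+j\le p+q<p+r$ (using $q<r$), and the remaining conditions are built into the definition.

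Finally, non-integrability: $e_p$ and $\tilde e_p$ both lie in $\mathrm{Ker}\,\jt^p$ (using $q\ge p$), yet $[e_p,\tilde e_p]=\hat e_{p+1}$ satisfies $\jt^p\hat e_{p+1}=\hat e_1\ne 0$, so $\mathrm{Ker}\,\jt^p$ is non-integrable, as required. The principal obstacle is identifying the right ansatz for the brackets: the diagonal choice $[e_i,\tilde e_j]=\hat e_{i+j-p+1}$ is a natural one that simultaneously yields the obstruction $[e_p,\tilde e_p]\notin\mathrm{Ker}\,\jt^p$ at the corner $(p,p)$ and satisfies the combinatorial identity underlying $N=0$; both features hinge on the strict inequality $q<r$ from (\ref{nbg}), which guarantees the existence of $\hat e_{p+1}$ and confines the potential ``corner failure'' at $(i,j)=(1,1)$ to indices excluded by $k\ge 3$, i.e.\ $i+j\ge p+2$.
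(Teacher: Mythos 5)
Your proposal is correct and follows the paper's proof architecture exactly: realize $\,\jt\,$ by constant structure constants on a step-$2$ nilpotent Lie group, verify $\,N\nnh=\hh0\,$ through Lemma~\ref{noiff} via the quantities $\,E_{i,j}^s$, and obstruct integrability of $\,\mathrm{Ker}\,\jt^p$ by the corner bracket $\,[e_p,\tilde e_p]=\hat e_{p+1}\notin\mathrm{Ker}\,\jt^p$ (the paper's condition $\,E_{p,p}^p\ne0\,$ in (\ref{inl})). The only divergence is your ansatz $\,E_{i,j}^p=\mathbf{1}\hh[\hh1\le i\le p,\,1\le j\le q,\,i+j\ge p\hh]\,$ in place of the paper's $\,E_{i,j}^p=\max(0,\,i+\min(0,j-p))\,$ from (\ref{dfe}); both satisfy (\ref{eij})--(\ref{bdr}), and yours is if anything easier to check, since on the stencil relevant to (\ref{eij}) the half-plane constraint is automatic, so the indicator factors as $\,\mathbf{1}_{[1,p]}(i)\,\mathbf{1}_{[1,q]}(j)\,$ and its mixed second difference $\,(\mathbf{1}_{[1,p]}(i)-\mathbf{1}_{[1,p]}(i-1))(\mathbf{1}_{[1,q]}(j)-\mathbf{1}_{[1,q]}(j-1))\,$ can only fail to vanish at $\,(i,j)=(1,1)$, which lies outside the regime $\,i+j\ge p+2$.
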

\begin{proof}We identify a local frame field for $\,\jt$, chosen as in
Sect.\,\ref{lb}, with a basis of a Lie algebra
$\,\mathfrak{g}\,$ formed by left-in\-var\-i\-ant vector fields on a Lie group 
$\,G$. This is achieved by requiring (\ref{cij}) and the boundary conditions 
($C_{i,j}^k\hs=0\,$ if $\,k\le0$, or $\,k>r$, or $\,i\le0$, or $\,j\le0$)
to be satisfied by {\it constants\/} $\,C_{i,j}^k$ or, equivalently, finding
constants $\,E_{i,j}^s$ with (\ref{eij}) -- (\ref{bdr}). 
(Our choice will cause all brackets to lie in the center, thus implying the
Ja\-co\-bi identity.)
Our $\,\jt\,$ then becomes a left-in\-var\-i\-ant $\,(1,1)\,$ tensor 
field on $\,G\,$ acting as an en\-do\-mor\-phism of the tangent bundle which
sends each frame vector field either to the preceding one, or to zero. 
As a consequence of (\ref{nbg}), our local frame contains
%Consider first the case where
\begin{equation}\label{tdf}
\mathrm{three\ different\ }\,\jt\hyp\mathrm{or\-bits\ (\ref{fbl})\ of\
lengths\ }\,p,q,r\,\mathrm{\
with\ }\,p\le q<r\hh.
\end{equation}
Fixing such $\,\jt$-or\-bits, we now set, in 
the discussion of Sect.\,\ref{lb}, $\,E_{i,j}^s\nh=0\,$ for all integers
$\,i,j,s$, with the exception of $\,(i,j,p)\,$ from the set 
$\,[1,p\hh]\times[1,q]\times\{p\}\,$ contained in the range 
$\,[1,p\hh]\times[1,q]\times[1,r]\,$ corresponding to our three 
$\,\jt$-or\-bits (\ref{tdf}).
\begin{figure}[H]
  \centering
  \input{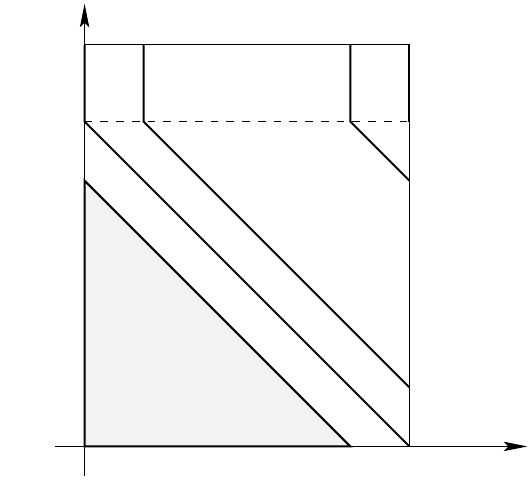_t}
  \caption{Values of \ $E=E_{i,j}^p$}
  \label{fig:ng9}
\end{figure}
Given integers 
$\,i\le p\,$ and $\,j\le q$, we define $\,E_{i,j}^p$ by
\begin{equation}\label{dfe}
E_{i,j}^p\hs
=\,\mathrm{max}\hh(0\hh,\,i\,+\,\hn\mathrm{min}\hh(0\hh,\hs j-p))\hh.
\end{equation}
Speaking below of rectangles, triangles, lines and line segments, we always
mean their intersections with $\,\bbZ^2\nh$, while 
(sub)rec\-tan\-gles are occasionally reduced to segment or single points.
Restricted to $\,(i,j)$ ranging over the rectangle $\,[1,p\hh]\times[1,q]$, 
our $\,E_{i,j}^p$ equals $\,0\,$ on the triangle with vertices
$\,(1,1),(1,p-1),(p-1,1)\,$ (treated as the empty set
when $\,p=1$, or the single point $\,(1,1)\,$ for $\,p=2$), and
$\,E_{i,j}^p=p$ on the segment $\,\{p\}\times[\hh p,q]\,$ (a point
when $\,p=q$); 
the latter claim is obvious, the former immediate from the equality
\begin{equation}\label{ipm}
i\,+\,\hn\mathrm{min}\hh(0\hh,\hs j-p)\,=\,\mathrm{min}\hh(i\hh,\hs i+j-p)\hh.
\end{equation}
If $\,p>1$, then, for any $\,l\in\{1,\dots,p-1\}$, we have $\,E_{i,j}^p=l\,$
on the 
two-seg\-ment broken line joining the points $\,(l,q),(l,p),(p,l)\,$
(reduced to a segment when $\,p=q>1$); cf.\ (\ref{ipm}). 
(This is particularly simple for %
$\,p=q=1$, with $\,E_{1,1}^1\nh=1$.)

The corresponding Nijen\-huis tensor (\ref{nih}) vanishes identically, by 
Lemma~\ref{noiff}, since -- as we now proceed to show -- our $\,E_{i,j}^p$
satisfy (\ref{eij}) and (\ref{bdr}). 
First, (\ref{bdr}) holds, as nonpositivity of $\,i,j\,$ or 
$\,i+j-s=i+j-p\,$ in
(\ref{dfe}) yields $\,E_{i,j}^p\nh=0$, by (\ref{ipm}), and the remaining
implication is 
vacuous: $\,i+j\le p+q=s+q<\nh s+r$.

Next, (\ref{eij}) ``essentially'' follows from Remark~\ref{eijse}:
$\,E_{i,j}^p\nh=\mathrm{max}\hh(0,i)$, which is a function of $\,i$, 
on the sub\-rec\-tan\-gle
$\,[1,p\hh]\times[\hh p,q]\,$ (a segment when $\,p=q$).
On $\,[1,p\hh]\times[1,p\hh]$, (\ref{dfe}) in turn gives
$\,E_{i,j}^p\nh=\mathrm{max}\hh(0,\,i+j-p)$, which coincides with
$\,\,i+j-p\,$ on the sub\-tri\-an\-gle given by $\,i+j\ge s+2=p+2$.

To dispel any doubts, we now establish (\ref{eij}) rigorously, for $\,s=p$.
Of interest to us are integers $\,i,j\,$ with $\,i+j\ge p+2$, $\,\,i\le p\,$
and 
$\,j\le q$. We are also free to assume that $\,i,j\ge1$, since otherwise,
by (\ref{bdr}), all four terms in (\ref{eij}) equal $\,0$. If $\,j>p$,
(\ref{dfe}) gives $\,E_{i,j}^p\nh=E_{i,j-1}^p\nh=i\,$ for all $\,i\ge0$.
Now the four terms in (\ref{eij}) are $\,i,i-1,i-1,i\,$ (whenever 
$\,i\ge1$), and the required equality follows. When $\,j\le p\,$ (and hence
$\,j-1<p$), given $\,i\in\bbZ$, (\ref{dfe}) reads
$\,E_{i,j}^p\nh=\mathrm{max}\hh(0\hh,\,i+j-p)$ and, similarly,
$\,E_{i,j-1}^p\nh=\mathrm{max}\hh(0\hh,\,i+j-p-1)\,$ with $\,j\,$ replaced
by $\,j-1$. In the case of interest to us, $\,i+j\ge p+2\,$ (see the beginning
of this paragraph), so that
$\,E_{i,j}^p\nh=i+j-p\,$ and $\,E_{i,j-1}^p\nh=i+j-p-1$, for all $\,i$, 
and we get the equality in (\ref{eij}):
$\,(i+j-p)+(i-1+j-1-p)=(i-1+j-p)+(i+j-1-p)$.%, which proves our claim.

Finally, since $\,E_{p,p}^p\nh=p\ne0$, (\ref{inl}) applied to
$\,i=j=s=l=p\,$ shows that $\,\mathrm{Ker}\,\jt\hh^p$ is not
in\-te\-gra\-ble.
\end{proof}

\section{Proof of Theorem~\ref{cntrl}: the sufficiency of
{\rm(\ref{csd})}}\label{sc}
\setcounter{equation}{0}
We now show that, given an algebraically constant nil\-po\-tent $\,(1,1)\,$ 
tensor $\,\jt$ on a manifold $\,M\,$ of dimension $\,n\ge1$, with
$\,N\nnh=\hh0$, and with the Jor\-dan normal form $\,d_1\w\nnh\ldots d_m\w$
satisfying condition
(\ref{csd}), $\,\jt\,$ must also have the property (i) in
Theorem~\ref{npjor}, and hence 
be locally constant. To this end, we choose a local frame field realizing the
Jor\-dan normal form of $\,\jt$. See Remark~\ref{gstrc}.

In the first case of (\ref{csd}), $\,d_1\w=\ldots=d_m\w=d\,$ for some
$\,d\ge1$, and our local frame field splits into disjoint $\,\jt$-or\-bits 
of the form $\,v_1\w,\dots,v\nh_d\w$, all of length $\,d$, while 
$\,v_i\w=\jt^{d-i}\nh v\nh_d\w$ for $\,i=1,\dots,d-1$, and the final vector
$\,v\nh_d\w$ lies outside of $\,\mathrm{Im}\,\jt$. Thus, 
$\,\zz^i\nh$, $\,i\ge0$, is obviously equal to either $\,T\nh M\,$ (when
$\,i\ge d$), or to $\,\bz\hh^{d-i}$ (if $\,1\le i<d$), and (\ref{ziz}\hh-a)
yields our claim.

Consider now the second case of (\ref{csd}): 
$\,d_1\w=\ldots=d_{m-1}\w=d>d'\nh=d_m\w$ for some $\,d,d'\nh\ge1$, with
$\,m>1$, 
leading to $\,\jt$-or\-bits $\,v_1\w,\dots,v\nh_d\w$ of length $\,d$,
of which there are $\,m-1$, and to one $\,\jt$-or\-bit of length $\,d'\nh$.

We first prove the in\-te\-gra\-bi\-li\-ty of
$\,\zz^i$ when $\,1\le i\le d'\nh$, using induction on $\,i$. As $\,\zz\nh^1$
is spanned by the $\,m\,$ initial vectors from all $\,\jt$-or\-bits, taken one
from each, 
and $\,\bz\hh^{d-1}$ by the $\,m-1$ initial vectors from all $\,\jt$-or\-bits
of length $\,d$, the latter sub\-bun\-dle of $\,T\nh M\,$ is
contained in the former with co\-dimen\-sion
one. Thus, (\ref{ziz}\hh-b) and Lemma~\ref{cdone} 
yield the in\-te\-gra\-bi\-li\-ty of $\,\zz\nh^1\nh$. For the induction step,
if $\,1\le i<d'$ and $\,\zz^i$ is in\-te\-gra\-ble, $\,\zz^{i+1}$ 
is spanned by $\,m(i+1)\,$ vectors: $\,v_1\w,\dots,v_{i+1}\w$ from all the
$\,\jt$-or\-bits combined (if one writes the $\,\jt$-or\-bits as
$\,v_1\w,\dots,v\nh_d\w$ or $\,v_1\w,\dots,v\nh_{d'}\w$), and so $\,\zz^{i+1}$
contains, with co\-dimen\-sion one, the span
$\,\zz^i\nh+\hs\bz\hh^{d-i-1}$ of $\,\zz^i$ and $\,\bz\hh^{d-i-1}\nh$.
By (\ref{ziz}\hh-b) and (\ref{ziz}\hh-d)
$\,[\zz^i\nh+\hs\bz\hh^{d-i-1}\nh,\zz^{i+1}]
\subseteq\zz^{i+1}\hskip-1.1pt+\hs\bz\hh^{d-i-1}\subseteq\zz^{i+1}\nh$, and
Lemma~\ref{cdone} completes the induction step.

Finally, let $\,d'\nh<i<d\,$ and $\,k=d'\nh-1$. This time $\,\zz^i$ contains
$\,\zz^k\nnh\nh+\hs\bz\hh^{d-i}$ with co\-dimen\-sion one: the former is
spanned by $\,(m-1)i+d'$ vectors (the initial $\,i\,$ ones from all 
$\,\jt$-or\-bits of length $\,d$, plus the whole $\,\jt$-or\-bit of length
$\,d'$), the latter -- by the same vectors except the last one in the
length $\,d'$ orbit. Once again, (\ref{ziz}\hh-b) and (\ref{ziz}\hh-d) give 
$\,[\zz^k\nh+\hs\bz\hh^{d-i}\nh,\zz^i]
\subseteq\zz^i\nh+\hs\bz\hh^{d-i}\subseteq\zz^i\nh$, and we can use
Lemma~\ref{cdone}.

\section{Generalized al\-most-tan\-gent structures}\label{ga}
\setcounter{equation}{0}
The following construction provides -- as shown below -- a local description 
of all algebraically constant $\,(1,1)\,$ tensors $\,\jt\,$ such that
$\,\jt^2\nh=0\,$ and the Nijen\-huis tensor (\ref{nih}) vanishes
identically.

Given a distribution $\,\dz\,$ on a manifold
$\,\vs$, let $\,M\,$ be the total space of an af\-fine 
bundle over $\,\vs\,$ associated with the quotient vector bundle 
$\,T\nh\vs/\dz$. Using the bundle projection $\,\pi:M\to\vs\,$ and the
quo\-tient-bun\-dle projection morphism
$\,T\nh\vs\ni v\mapsto[v]\in T\nh\vs/\dz$, we define a 
$\,(1,1)\,$ tensor $\,\jt\,$ on $\,M\,$ by
\begin{equation}\label{txv}
\jt\nh_x\w v
=[d\pi\nh_x\w v]\in T\hskip-2.7pt_y\w\hs\vs/\dz\nnh_y\w
=T\hskip-2.7pt_x\w\hn M\nnh_y\w\mathrm{,\ if\ }\,x\in M\nnh_y\w
=\pi^{-\nnh1}(y)\hh,
\end{equation}
whenever $\,x\in M\,$ and $\,v\in T\hskip-2.7pt_x\w\hn M\nh$.
Then $\,\jt^2\nh=0$, since all $\,\jt$-im\-ages are vertical. Also,
$\,N\nnh=\hh0\,$ in (\ref{nih}). In fact, $\,\mathrm{Im}\,\jt\,$ is the
vertical distribution $\,\vz=\,\mathrm{Ker}\hskip2.3ptd\pi$. Evaluating
(\ref{nih}), withous loss of generality, on $\,\pi$-pro\-ject\-a\-ble vector
fields, we see that, by (\ref{prj}), the first, second and fourth terms on
the right-hand side of vanish as $\,\jt^2\nh=0$. So does the third term:
$\,\jt v,\,\jt w\,$ restricted to each fibre are af\-fine-space
translations, and consequently commute.
%\end{example}
\begin{theorem}\label{sqezr}Every algebraically constant\/
$\,(1,1)\,$ tensor\/ $\,\jt\,$ with\/ $\,\jt^2\nh=0\,$ and vanishing
Nijen\-huis tensor\/ {\rm(\ref{nih})} arises, locally, from the above
construction, and the fibre dimension of\/
$\,\dz\,$ equals the co\-dimen\-sion of\/ $\,\mathrm{Im}\,\jt\,$ in\/
$\,\mathrm{Ker}\,\jt$, while
\begin{equation}\label{tin}
\jt\,\mathrm{\ is\ in\-te\-gra\-ble\ if\ and\ only\ if\ so\ is\ the\ 
distribution\ }\,\dz.
\end{equation}
\end{theorem}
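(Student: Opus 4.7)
The plan is to reverse the construction: starting from $\,\jt\,$ with $\,\jt^2\nh=0\,$ and $\,N\nnh=\hh0$, first build the base $\,\vs\,$ and the distribution $\,\dz$, then exhibit the affine bundle structure, and finally verify (\ref{tin}). Since $\,\jt^2\nh=0\,$ forces $\,\mathrm{Im}\,\jt\subseteq\mathrm{Ker}\,\jt$, the hypothesis $\,N\nnh=\hh0\,$ together with (\ref{ziz}\hh-a) gives in\-te\-gra\-bi\-li\-ty of $\,\mathrm{Im}\,\jt$, so I choose a local submersion $\,\pi\nnh:M\nh\to\vs\,$ whose vertical distribution is $\,\mathrm{Im}\,\jt$. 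From (\ref{ziz}\hh-b) with $\,i=j=1\,$ and $\,\mathrm{Im}\,\jt\subseteq\mathrm{Ker}\,\jt$ one has $\,[\mathrm{Im}\,\jt,\mathrm{Ker}\,\jt]\subseteq\mathrm{Ker}\,\jt$, so Lemma~\ref{prdis}(c) yields $\,\pi$-pro\-ject\-a\-bil\-i\-ty of $\,\mathrm{Ker}\,\jt$; I set $\,\dz=d\pi(\mathrm{Ker}\,\jt)$. Since $\,\mathrm{Ker}(d\pi|_{\mathrm{Ker}\,\jt})=\mathrm{Im}\,\jt$, the rank of $\,\dz\,$ equals the codimension of $\,\mathrm{Im}\,\jt\,$ in $\,\mathrm{Ker}\,\jt$, as asserted. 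The rule $\,\jt_x v\mapsto[d\pi_x v]\in T_{\pi(x)}\vs/\dz_{\pi(x)}\,$ is a well-defined vector bundle iso $\,\iota:\mathrm{Im}\,\jt\to\pi^*(T\vs/\dz)$: well-definedness uses that $\,\jt_x v_1=\jt_x v_2\,$ forces $\,v_1\nh-\hn v_2\in\mathrm{Ker}\,\jt$, so $\,d\pi(v_1\nh-\hn v_2)\in\dz$, and bijectivity follows from equal rank.

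The main obstacle is upgrading the submersion $\,\pi\,$ to an affine bundle modeled on $\,T\vs/\dz$. For $\,\xi\in T_y\vs/\dz_y$, lift a representative to a $\,\pi$-pro\-ject\-a\-ble vector field $\,\tilde\xi\,$ on $\,M\nh$; then $\,\jt\tilde\xi\,$ is vertical and its restriction to $\,M\nh_y\,$ is precisely the vector field corresponding under $\,\iota\,$ to the constant section $\,\xi$. The key computation, obtained from $\,N\nnh=\hh0\,$ in (\ref{nih}) together with $\,\jt^2\nh=0$, reads
\begin{equation*}
[\jt\tilde\xi,\jt\tilde\eta]\,=\,\jt[\jt\tilde\xi,\tilde\eta]\,+\,\jt[\tilde\xi,\jt\tilde\eta]\hh,
\end{equation*}
and both terms on the right vanish: $\,\jt\tilde\xi\,$ being vertical and $\,\tilde\eta\,$ pro\-ject\-a\-ble, (\ref{prj}) makes $\,[\jt\tilde\xi,\tilde\eta]\,$ a section of $\,\mathrm{Im}\,\jt\subseteq\mathrm{Ker}\,\jt$, hence killed by $\,\jt$, and symmetrically for the other bracket. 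Thus the vertical fields $\,\jt\tilde\xi\,$ arising from constant $\,\xi\,$ pairwise commute along each fiber, and, being pointwise linearly independent and of the right count, their flows furnish a simply transitive local action of $\,T_y\vs/\dz_y\,$ on $\,M\nh_y$, giving $\,\pi\,$ the sought affine bundle structure; formula (\ref{txv}) is then just the restatement of the definition of $\,\iota$.

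For the equivalence (\ref{tin}): if $\,\jt\,$ is in\-te\-gra\-ble then $\,\mathrm{Ker}\,\jt\,$ is in\-te\-gra\-ble by (\ref{iii}), and the $\,\pi$-pro\-ject\-a\-bil\-i\-ty established above together with (\ref{pri}) forces $\,\dz\,$ to be in\-te\-gra\-ble. Conversely, if $\,\dz\,$ is in\-te\-gra\-ble, pick coordinates $\,(y^\alpha,y^A)\,$ on $\,\vs\,$ in which $\,\partial/\partial y^\alpha$ span $\,\dz$, and affine fiber coordinates $\,x^A$ modeled on the classes $\,[\partial/\partial y^A]$; in the resulting coordinates on $\,M\nh$, formula (\ref{txv}) gives $\,\jt(\partial/\partial y^A)=\partial/\partial x^A$ with all other components zero, so $\,\jt\,$ is locally constant and hence, by (\ref{imp}), in\-te\-gra\-ble.
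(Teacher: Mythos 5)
Your proposal is correct and follows essentially the same route as the paper's proof: integrability of $\,\mathrm{Im}\,\jt\,$ from (\ref{ziz}\hh-a), projectability of $\,\mathrm{Ker}\,\jt\,$ from (\ref{ziz}\hh-b) with $\,i=j=1\,$ and Lemma~\ref{prdis}(c), and the affine-space structure on the fibres from the commuting of vertical lifts, which is exactly the paper's observation (\ref{cmi}) derived from $\,N\nnh=\hh0\,$ and $\,\jt^2\nh=0$. The only minor divergence is your explicit coordinate verification of the converse half of (\ref{tin}), where the paper instead just cites Theorem~\ref{npjor} and (\ref{pri}); both are fine.
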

\begin{proof}Suppose that $\,\jt^2\nh=0\,$ and $\,N\nnh=\hh0\,$ in
(\ref{nih}). By (\ref{ziz}\hh-a), $\,\mathrm{Im}\,\jt\,$ is an 
in\-te\-gra\-ble distribution, while
$\,\mathrm{Im}\,\jt\subseteq\mathrm{Ker}\,\jt$. 
Due to (\ref{prj}) and (\ref{nih}) with
$\,\jt^2\nh=0$,
\begin{equation}\label{cmi}
\mathrm{any\ two\ }\,(\mathrm{Im}\,\jt)\hyp\mathrm{pro\-ject\-a\-ble\ vector\
fields\ have\ commuting\ }\,\jt\hyp\mathrm{im\-ages.}
\end{equation}
By (\ref{ziz}\hh-b) for $\,i=j=1\,$ and
Lemma~\ref{prdis}(c), 
on an open set $\,M'\nh\subseteq M\,$ with a bundle projection
$\,\pi:M'\nh\to\vs\,$ having $\,\mathrm{Im}\,\jt\,$ as the vertical
distribution, $\,\mathrm{Ker}\,\jt$ is $\,\pi$-pro\-ject\-a\-ble onto a
distribution $\,\dz\,$ on $\,\vs$, with (\ref{tin}) obvious from
Theorem~\ref{npjor} and (\ref{pri}). Any $\,\pi$-pro\-ject\-a\-ble lift, along
the fibre $\,\pi^{-\nnh1}(y)$, of any vector $\,w\,$ tangent to $\,\vs\,$ at 
$\,y\in\vs$, is mapped by $\,\jt\,$ onto the ``vertical lift'' of $\,w$, a
vector field tangent to $\,\pi^{-\nnh1}(y)$, which vanishes precisely when
$\,w\,$ is tangent to $\,\dz$. By (\ref{cmi}) 
the vertical lifts of any $\,w,w'\in T\hskip-2.7pt_y\w\hs\vs\,$ commute. 
This turns 
$\,\pi^{-\nnh1}(y)$, locally, into an af\-fine space having the translation
vector space $\,T\hskip-2.7pt_y\w\hs\vs/\dz\nnh_y\w$, with $\,\jt\,$ given
by (\ref{txv}).
\end{proof}
Theorem~\ref{sqezr} illustrates a special case of Theorem~\ref{cntrl}:
the condition $\,\jt^2\nh=0$ corresponds to the Jor\-dan
normal forms $\,2\ldots2\,$ and $\,2\ldots21\ldots1\,$ (plus 
$\,1\ldots1$, for $\,\jt=0$). Of these,
only $\,2\ldots2$, $\,\,2\ldots21\,$ and $\,1\ldots1\,$ satisfy (\ref{csd}),
reflecting the fact that $\,\dz\,$ is necessarily in\-te\-gra\-ble only if 
it has the fibre dimension $\,0,\hs1\,$ or $\,\dim\vs$.

When $\,\mathrm{Ker}\,\jt=\mathrm{Im}\,\jt$, that is, $\,\dz\,$ is the zero
distribution, our construction gives rise to what is referred to as {\it
al\-most-tan\-gent structures\/} \cite{yano-davies,goel}, and
Theorem~\ref{sqezr} becomes the local version of 
\cite[Theorem on p.\,69]{crampin-thompson}.

\section{Differential $\,q$-forms on an $\,n$-man\-i\-fold,
$\,q=0,1,2,n-1,n$}\label{df}
\setcounter{equation}{0}
We now prove Proposition~\ref{dffrm}. Let $\,\zeta\hh$ be an
algebraically constant differential $\,q$-form on an 
%$\,n$-man\-i\-fold
$\,n$-di\-men\-sion\-al manifold, $\,q=0,1,2,n-1,n$, with $\,d\hh\zeta=0\,$
(the last condition being obviously redundant if $\,q=n\,$ or -- as
$\,\zeta\,$ is constant -- if $\,q=0$).

The cases $\,q=0\,$ and $\,q=1\,$ are obvious: the $\,1$-form $\,\zeta\,$ (if
nonzero), being 
locally exact, equals $\,dx^1$ in suitable local coordinates
$\,x^1\nh,\dots,x^n\nh$.

When $\,q=2$, algebraic constancy amounts to constant rank, and our claim
follows as Dar\-boux's theorem
\cite[p.\,40]{bryant-chern-gardner-goldschmidt-griffiths} gives
$\,\zeta=dx^1\nnh\wedge dx^2\nh+\ldots+\hs dx^{2r-1}\nnh\wedge dx^{2r}$ in
some local coordinates $\,x^1\nh,\dots,x^n\nh$, with
$\,2r=\mathrm{rank}\,\zeta\ge0$.

If $\,q=n\,$ and $\,\zeta\ne0$, we have, in suitable 
local coordinates $\,x^1\nh,\dots,x^n\nh$,
\begin{equation}\label{zed}
\zeta=dx^1\nnh\wedge dx^2\nnh\wedge\ldots\wedge\hs dx^n\nh\mathrm{,\ where\
}\,x^2\nh,\dots,x^n\mathrm{\ can\ be\ arbitrary,}
\end{equation}
as long as $\,dx^2\nh\wedge\ldots\wedge dx^n\ne0$. In
fact, starting from
$\,\zeta=\phi\,dx^1\nnh\wedge\hs dx^2\nh\wedge\ldots\wedge\hs dx^n$ for a
function $\,\phi\,$ without zeros, and choosing $\,\psi\,$ with
$\,\partial\nh_1\w\psi=\phi$, we see that $\,d\psi\,$ equals
$\,\phi\,dx^1$ plus a functional combination of
$\,dx^2\nh,\dots,dx^n$ and so
$\,\zeta=d\psi\wedge dx^2\nh\wedge\ldots\wedge dx^n\nh$.

Finally, let $\,q=n-1$. Assuming $\,\zeta\hh$ to be nonzero, and 
fixing a nonzero $\,n$-form $\,\omega$, we get
$\,\zeta=\omega(v,\,\cdot\,,\dots,\,\cdot\,)$, for a unique (nonzero) vector
field $\,v$. Then, by (\ref{zed}), 
$\,\omega=dx^1\nnh\wedge\hs dx^2\nh\wedge\ldots\wedge\hs dx^n$ in some 
local coordinates $\,x^1\nh,\dots,x^n\nh$, with $\,x^2\nh,\dots,x^n$
chosen so that $\,dx^2\nh(v)=\ldots=dx^n(v)=0$. 
Now $\,\zeta=\chi\,dx^2\nnh\wedge\ldots\wedge\hs dx^n$ for $\,\chi=dx^1\nh(v)$,
and $\,\partial\nh_1\w\chi=0\,$ as $\,d\hh\zeta=0$. Our $\,\zeta$, being
thus a top-de\-gree form in $\,n-1\,$ variables, equals, %-- according to
by (\ref{zed}), $\,dy^2\nh\wedge\ldots\wedge dy^n$ in suitable 
coordinates $\,y^1\nh,\dots,y^n\nh$.

\section{Differential forms of other degrees}\label{do}
\setcounter{equation}{0}
We now proceed to verify the statement preceding formula (\ref{nin}).
The algebraic constancy of $\,\zeta\hh$ is clear as
$\,\zeta=
(\xi\hh^1\nnh\wedge\xi\hh^2\nh+\xi\hh^3\nnh\wedge\xi\hh^4)\wedge\xi\hh^5\wedge\ldots
\wedge\xi\hh^{q+2}\nh$, with linearly independent $\,1$-forms 
$\,\xi\hh^1\nh,\dots,\xi\hh^{q+2}\nh$, and its closedness since
$\,d\hh\zeta\,$ is the exterior product of 
$\,(dx^1\nnh\wedge dx^2\nh+dx^3\nnh\wedge dx^4)
\wedge(dx^1\nnh\wedge dx^2\nh-dx^3\nnh\wedge dx^4)\,$ (obviously equal to
$\,0$) and $\,dx^6\nnh\wedge\ldots\wedge dx\hh^{q+2}\nh$. Being algebraically
constant, $\,\zeta\hh$ gives rise to the vector sub\-bun\-dle
$\,\mathcal{F}\hs$ of $\,T^*\hskip-1.8ptM\,$ such that the sections of
$\,\mathcal{F}\hs$ are %precisely
those $\,1$-forms $\,\xi\,$ for which
$\,\xi\wedge\zeta=0$. The sections $\,\xi\,$ of $\,\mathcal{F}\hs$ also
coincide with functional combinations of the $\,1$-forms
\begin{equation}\label{fco}
\eta,\,dx^6\nh,\ldots,\,dx\hh^{q+2}\nh\mathrm{,\ where\
}\,\eta=dx^5\nnh+x^1dx^2\nh-x^3dx^4\nh.
\end{equation}
In fact, writing $\,\xi=\xi_{\hh i}\w\hs dx^i\nh$, we see that 
$\,\xi\wedge\zeta\,$ contains no contributions from the terms
$\,\xi_{\hh i}\w\hs dx^i$ (no summation) with $\,6\le i\le q+2\,$ (making
$\,\xi\hh_6\w,\dots,\xi\hh_{q+2}\w$ completely arbitrary) while for 
$\,\theta
=(dx^1\nnh\wedge dx^2\nh+dx^3\nnh\wedge dx^4)\wedge(dx^5\nnh+x^1dx^2\nh
-x^3dx^4)\,$ one has
\[
\theta
=dx^1\nnh\wedge dx^2\nnh\wedge dx^5\nh+dx^3\nnh\wedge dx^4\nnh\wedge dx^5\nh
-x^3dx^1\nnh\wedge dx^2\nnh\wedge dx^4\nh
+x^1dx^2\nnh\wedge dx^3\nnh\wedge dx^4\nh.
\]
and so each term $\,\xi_{\hh i}\w\hs dx^i$ (no summation, again) with
$\,q+2<i\le n\,$
contributes to $\,\xi\wedge\zeta\,$ the expression
$\,\xi_{\hh i}\w\hs dx^i\nh\nnh\wedge\theta\nh\wedge
dx^6\nnh\wedge\ldots\wedge dx\hh^{q+2}$ (no summation) comprising all the
terms in $\,\xi\wedge\zeta\,$ involving the factor $\,dx^i\nh$. Linear 
independence of the differentials $\,dx^1\nh,\dots,dx^n$ now gives
$\,\xi_{\hh i}\w=0\,$ whenever $\,q+2<i\le n$. Finally, the exterior
products of
$\,\xi_1\w dx^1\nh,\,\xi_{\hh2}\w dx^2\nh,\,\xi_{\hh3}\w dx^3\nh,
\,\xi_{\hh4}\w dx^4\nh,\,\xi_{\hh5}\w dx^5$ with $\,\theta\,$ are 
\begin{equation}\label{xtp}
\begin{array}{l}
\xi_1\w(dx^1\nnh\wedge dx^3\nnh\wedge dx^4\nnh\wedge dx^5\nh
+x^1dx^1\nnh\wedge dx^2\nnh\wedge dx^3\nnh\wedge dx^4)\hh,\\
\xi_{\hh2}\w(dx^2\nnh\wedge dx^3\nnh\wedge dx^4\nnh\wedge dx^5)\hh,\\
\xi_{\hh3}\w(dx^1\nnh\wedge dx^2\nnh\wedge dx^3\nnh\wedge dx^5\nh
-x^3dx^1\nnh\wedge dx^2\nnh\wedge dx^3\nnh\wedge dx^4)\hh,\\
\xi_{\hh4}\w(dx^1\nnh\wedge dx^2\nnh\wedge dx^4\nnh\wedge dx^5)\hh,\\
\xi_{\hh5}\w(x^3dx^1\nnh\wedge dx^2\nnh\wedge dx^4\nnh\wedge dx^5\nh
-x^1dx^2\nnh\wedge dx^3\nnh\wedge dx^4\nnh\wedge dx^5)\hh.
\end{array}
\end{equation}
The condition $\,\xi\wedge\zeta=0\,$ means, after the cancellation of 
$\,dx^6\nnh\wedge\ldots\wedge dx\hh^{q+2}\nh$, that the sum of the five lines
of (\ref{xtp}) equals $\,0$. Writing $\,[ijkl]\,$ for
$\,dx^i\nnh\wedge dx^j\nnh\wedge dx^k\nnh\wedge dx^l\nh$, we see that
$\,[1345]\,$ and $\,[1235]\,$ occur just once each, giving
$\,\xi_1\w=\xi_{\hh3}\w=0$, while the sum of the remaining three lines equals
$\,(\xi_{\hh4}\w+\xi_{\hh5}\w x^3)[1245]
+(\xi_{\hh2}\w-\xi_{\hh5}\w x^1)[2345]$. Thus,
$\,\xi_{\hh4}\w+\xi_{\hh5}\w x^3=0=\xi_{\hh2}\w-\xi_{\hh5}\w x^1\nh$, and 
the sum of
$\,\xi_{\hh i}\w\hs dx^i$ over $\,i=1,\dots,5\,$ equals a function times
the $\,1$-form $\,\eta\,$ in (\ref{fco}), proving our claim about (\ref{fco}).

If $\,\zeta\hh$ were in\-te\-gra\-ble, so would be -- according to 
(\ref{iii}) -- the simultanous kernel of the
$\,1$-forms (\ref{fco}) (that is, of all sections of $\,\mathcal{F}$),
naturally determined by $\,\zeta$. %, would be %in\-te\-gra\-ble in view of
%(\ref{iii}). %distribution.
This is not the case, as
$\,d\eta\wedge\eta\wedge dx^6\nnh\wedge\ldots\wedge dx\hh^{q+2}$ is nonzero, 
being equal to 
$\,(dx^1\nnh\wedge dx^2\nnh\wedge dx^5\nh
-dx^3\nnh\wedge dx^4\nnh\wedge dx^5\nh
-x^3dx^1\nnh\wedge dx^2\nnh\wedge dx^4\nh
-x^1dx^2\nnh\wedge dx^3\nnh\wedge dx^4)
\wedge dx^6\nnh\wedge\ldots\wedge dx\hh^{q+2}\nh$.

\section{Sym\-me\-tric $\,(0,2)\,$ and $\,(2,0)\,$ tensors}\label{ss}
\setcounter{equation}{0}
\begin{proof}[Necessity and sufficiency of (\ref{nas})]Let $\,g\,$ be
in\-te\-gra\-ble, with $\,\nabla\nh g=0$ for a fixed 
tor\-sion-free connection $\,\nabla\nh$. The in\-te\-gra\-bi\-li\-ty of the
distribution $\,\vz\nh=\mathrm{Ker}\,g$, due to (\ref{iii}), allows us to
choose local coordinates and index ranges for $\,i,a,\lambda,\mu,\nu\,$ as in
Remark~\ref{nwcon}, so that $\,\vz\,$ is spanned by the coordinate vector
fields $\,\partial\nh_a\w$. As $\,\vz$ is obviously
$\,\nabla\nh$-par\-al\-lel, $\,\vg_{\hskip-2.7ptia}^{\hs k}=
\vg_{\hskip-2.7ptab}^{\hs k}=0$, while $\,g_{ia}\w=g_{ab}\w=0$, 
so that $\,\partial\nh_a\w g_{ij}\w=\vg_{\hskip-2.7ptai}^{\hs k}g_{kj}\w
+\vg_{\hskip-2.7ptaj}^{\hs k}g_{ik}\w=0$, and pro\-ject\-a\-bil\-i\-ty
of $\,g\,$ along $\,\vz\,$ follows from (\ref{prt}).

Conversely, suppose that $\,g\,$ is pro\-ject\-a\-ble along the
in\-te\-gra\-ble distribution $\,\vz\nh=\mathrm{Ker}\,g$. As before, we invoke
Remark~\ref{nwcon}, selecting local coordinates with index ranges for
$\,i,a,\lambda,\mu,\nu\,$ so as to make $\,\vz\,$ the span of the coordinate
fields $\,\partial\nh_a\w$. As pro\-ject\-a\-bil\-i\-ty of $\,g\,$ along
$\,\vz\,$ gives $\,\partial\nh_a\w g_{ij}\w=0$, while
$\,g_{ia}\w=g_{ab}\w=0$, the components $\,g_{ij}\w$ represent a
pseu\-\hbox{do\hs-}Riem\-ann\-i\-an metric in the factor manifold with the
coordinates $\,x^i\nh$. Denoting by $\,\vg_{\hskip-2.7ptij}^{\hs k}$ the
components of its Le\-vi-Ci\-vi\-ta connection, we now use Remark~\ref{nwcon}
to define the required tor\-sion-free 
connection $\,\nabla\hs$ with $\,\nabla\nh g=0$.
\end{proof}
\begin{proof}[Proof of Proposition~\ref{inttz}]In\-te\-gra\-bi\-li\-ty of the
former implies that of the latter by (\ref{iii}). Conversely, let 
the distribution $\,\bz=\mathrm{Im}\,\jt\,$ be in\-te\-gra\-ble. Using
Remark~\ref{nwcon}, we fix local coordinates and index ranges for 
$\,i,a,\lambda,\mu,\nu\,$ so that $\,\bz\,$ is the span of the coordinate 
fields $\,\partial\nh_i\w$. Thus, $\,\jt^{ia}\nh=\jt^{ab}\nh=0$, as the
$\,1$-forms $\,dx^a$ annihilate each $\,\partial\nh_i\w$, and hence are
sections of the sub\-bun\-dle
$\,\mathrm{Ker}\,\jt\subseteq T^*\hskip-1.8ptM\nh$. On each leaf of 
$\,\bz$, the restriction of $\,\jt\,$ is nondegenerate -- see (\ref{ndg}) --
and so it is the reciprocal of a pseu\-\hbox{do\hs-}Riem\-ann\-i\-an metric on 
the leaf. Its Le\-vi-Ci\-vi\-ta connection, with the components 
$\,\vg_{\hskip-2.7ptij}^{\hs k}$ (possibly depending on the variables 
$\,x^a$), makes the restriction of $\,\jt\,$ 
parallel. Thus, we may again invoke Remark~\ref{nwcon}
to obtain a tor\-sion-free 
connection $\,\nabla\hs$ such that $\,\nabla\nh\jt=0$.
\end{proof}

\section{Local constancy of bi\-vec\-tor fields}\label{st}
\setcounter{equation}{0}
\begin{proof}[Proof of Proposition~\ref{crbiv}]The `only if' part is
immediate: for a tor\-sion-free connection $\,\nabla\hs$ on the given manifold
having $\,\nabla\nh\jt=0$, the distribution $\,\bz=\mathrm{Im}\,\jt\,$ is
$\,\nabla\nh$-par\-al\-lel and hence in\-te\-gra\-ble, cf.\ (\ref{iii}), and
the tor\-sion-free connections induced by $\,\nabla\hs$ on the leaves of
$\,\bz\,$ make the restriction of $\,\jt\,$ to each leaf parallel, which
implies the same (and hence also closed\-ness) for their inverses.

Let us now assume that $\,\bz=\mathrm{Im}\,\hs\jt\hs$ is in\-te\-gra\-ble and
the inverses of the restrictions of $\,\jt\,$ to the leaves of $\,\bz\,$ are
all closed. These inverses are symplectic forms $\,\zeta$ on the leaves, and
the Dar\-boux theorem with parameters 
\cite[Lemma 3.10]{bandyopadhyay-dacorogna-matveev-troyanov} allows us to
choose functions $\,x^i$ which, restricted to each leaf, form local
coordinates with
$\,\zeta=dx^1\nnh\wedge dx^2\nh+\ldots+\hs dx^{2r-1}\nnh\wedge dx^{2r}\nh$,
where
$\,2r=\mathrm{rank}\,\zeta=\mathrm{rank}\,\jt$. We may also choose
functions $\,x^a\nh$, with the index ranges $\,1\le i\le 2r<a\le\dim M\nh$,
such that the differentials $\,dx^a$ form a local trivialization 
of the sub\-bun\-dle $\,\mathrm{Ker}\,\jt\subseteq T^*\hskip-1.8ptM\nh$.
In the resulting product coordinates $\,x^i\nh,x^a$ the components of
$\,\jt\,$ are all constant: $\,\jt^{ia}\nh=\jt^{ab}\nh=0$, while 
$\,\jt^{ij}\nh=1\,$ (or, $\,\jt^{ij}\nh=-\nnh1$) if $\,(j,i)$, or 
$\,(i,j)$, is one of the pairs $\,(1,2),(3,4),\dots,(2r-1,2r)$, and 
$\,\jt^{ij}\nh=0\,$ otherwise.
\end{proof}
  
\section{In\-te\-gra\-bi\-li\-ty of the kernels and images}\label{ik}
\setcounter{equation}{0}
For any vector bundle $\,\lz\,$ over a manifold $\,M\,$ and a 
vec\-tor-bun\-dle mor\-phism $\,\jt:T\nh M\to\lz^*$ of constant rank $\,r\,$
into its dual $\,\lz^*\nh$, the resulting dual mor\-phism
$\,\jt\hn^*\nnh:\lz\to T^*\hskip-1.8ptM\nh$, which also has
$\,\mathrm{rank}\,\jt\hn^*\nnh\nh=r$, gives rise to a ten\-sor-like object
$\,\tni$ (specifically, a section of
$\,\mathrm{Hom}\,(\lz\otimes\lz^{\wedge r}\nnh,
\,[T^*\hskip-1.8ptM]^{\wedge(r+2)})$), sending sections
$\,v,v\hn_1\w,\dots,v\hn_r\w$ of $\,\lz\,$ to the $\,(r+2)$-form
\begin{equation}\label{nht}
\tni(v,v\hn_1\w,\dots,v\hn_r\w)
=[d\hs(\jt\hn^*\nnh v)]\wedge\hs\jt\hn^*\nnh v\hn_1\w\wedge\ldots
\wedge\hs\jt\hn^*\nnh v\hn_r\w\hh.
\end{equation}
Here %$\,d\hs(\jt\hn^*\nnh v)\,$ fails to be tensorial in $\,v$, as 
$\,d\hs[\jt\hn^*\nnh(f\nh v)]=fd\hs(\jt\hn^*\nnh v)+d\hskip-.8ptf\nh
\wedge\hn\jt\hn^*\nnh v\,$
for a function $\,f\nh$. However, $\,\tni\hs$ itself is ten\-so\-ri\-al:
the non\-ten\-so\-ri\-al term $\,d\hskip-.8ptf\nh\wedge\hn\jt\hn^*\nnh v\,$ in
the last equality has zero exterior product with
$\,\jt\hn^*\nnh v\hn_1\w\wedge\ldots\wedge\hs\jt\hn^*\nnh v\hn_r\w$, since
$\,\mathrm{rank}\,\jt\hn^*\nnh\nh=r$. Furthermore,
\begin{equation}\label{ine}
\tni\nnh=\hh0\,\,\mathrm{\ identically\ if\ and\ only\ if\ 
}\,\mathrm{Ker}\,\jt\,\mathrm{\ is\ in\-te\-gra\-ble.}
\end{equation}
In fact, as $\,\mathrm{Ker}\,\jt\,$ is the simultanous kernel of the 
$\,1$-forms $\,\jt\hn^*\nnh v$, for all sections $\,v$ of $\,\lz$, its
in\-te\-gra\-bi\-li\-ty amounts to $\,d$-clos\-ed\-ness of the ideal
generated by all such $\,1$-forms which, as
$\,\mathrm{rank}\,\jt\hn^*\nnh\nh=r$, 
is nothing else than the vanishing of $\,\tni\nh$.

In the case of $\,\lz=T\nh M\,$ and a (possibly non\-sym\-me\-tric) 
$\,(0,2)\,$ tensor $\,g\,$ of constant rank $\,r\,$ on $\,M\nh$, treated as a
mor\-phism $\,\jt:T\nh M\to T^*\hskip-1.8ptM\,$ sending a vector field 
$\,w\,$ to the $\,1$-form $\,g(\,\cdot\,,w)$, the dual $\,\jt\hn^*\nnh$ acts
via $\,v\mapsto g(v,\,\cdot\,)$. Then
\begin{equation}\label{bcm}
\begin{array}{l}
\tni\hs\mathrm{\ in\ (\ref{nht})\ becomes\ }\,N'\mathrm{\ in\ 
(\ref{nnh}{\hyp}a),\ so\ that\ }\,N'\mathrm{\ is\ ten\-so\-ri\-al.}
\end{array}
\end{equation}
Let $\,\jt\,$ now be an algebraically constant nil\-po\-tent $\,(1,1)\,$
tensor on an $\,n$-man\-i\-fold with the Jor\-dan normal form
$\,d_1\w\nnh\ldots d_m\w$, cf.\ (\ref{wdi}).
In\-te\-gra\-bi\-li\-ty of $\,\jt$, as well as its local constancy, is
equivalent, by Theorem~\ref{npjor}, to the simultaneous vanishing of
the Nijen\-huis tensor $\,N\hs$ in (\ref{nih}) along with further
$\,d_1\w\nh-1\,$ Nijen\-huis-type tensors $\,N^i$, where $\,1\le i<d_1$, such
that $\,N^i\nh=0\,$ if and only if $\,\zz^i\nh=\mathrm{Ker}\,\jt^i$ is
in\-te\-gra\-ble. Specifically, this follows from (\ref{ine}) if we define
$\,N^i$ to be $\,\tni\hs$ in
(\ref{nht}) with $\,\lz=T\nh M\,$ and $\,\jt\,$ replaced by $\,\jt^i\nh$,
where $\,r\,$ equals (\ref{rkt}), and a fixed Riemannian metric on $\,M\,$ has
been used to identify $\,T\nh M\,$ with $\,T^*\hskip-1.8ptM\nh$, thus turning
each $\,\jt^i$ separately into a vec\-tor-bun\-dle mor\-phism
$\,T\nh M\to T^*\hskip-1.8ptM=\lz^*\nh$.

Finally, given a (skew)sym\-me\-tric $\,(2,0)\,$ tensor $\,\jt\,$ of 
constant rank $\,r\,$ on a manifold $\,M\nh$, we associate with $\,\jt\,$ a
Nijen\-huis-type $\,(2r+3,0)\,$ tensor $\,\hni\nh$, testing the 
in\-te\-gra\-bi\-li\-ty of the image distribution 
$\,\vz=\mathrm{Im}\,\hs\jt\subseteq T\nh M\nh$. (Note that $\,\jt\,$ is a
bundle mor\-phisms $\,T^*\hskip-1.8ptM\to T\nh M\,$ acting via
$\,\xi\mapsto\jt\hh\xi=\jt(\,\cdot\,,\xi)$.) To define $\,\hni\nh$,
we again fix a Riemannian metric on $\,M\nh$, which allows us to use
contractions and the Hodge star operator $\,*\,$ (as the latter enters our 
formula quadratically, $\,M\,$ need not be oriented). With 
$\,\jt\hh\xi=\jt(\,\cdot\,,\xi)\,$ as above, for $\,1$-forms
$\,\xi\,$ on $\,M\nh$, we set
\begin{equation}\label{hni}
\hni(\xi,\xi^1\nh,\dots,\xi^r\nh,\eta,\eta^1\nh,\dots,\eta^r)\,
=\,\Omega[\jt\hh\xi,\jt\eta]\hh,
\end{equation}
where $\,\xi,\xi^1\nh,\dots,\xi^r\nh,\eta,\eta^1\nh,\dots,\eta^r$ are any
$\,1$-forms on $\,M\nh$, and
\begin{equation}\label{omg}
\begin{array}{l}
\Omega\,\mathrm{\hs\hs\ denotes\hs\hs\ the\hs\hs\ result\hs\hs\ of\hs\hs\
an\hs\hs\ 
}\,(r\hs-1)\hyp\mathrm{fold\hs\hs\ contraction}\\
\mathrm{(\ref{rmo})\ of\ 
}\hskip-2.7pt*\hskip-2pt(\jt\hh\xi^1\nnh\wedge\ldots\wedge\jt\hh\xi^r)\,\,
\hn\mathrm{against\hn\
}\hskip-2.8pt*\hskip-2pt(\jt\eta^1\nnh\wedge\ldots\wedge\jt\eta^r)\nh.
\end{array}
\end{equation}
Clearly, at points where the $\,r$-tuples
$\,\jt\hh\xi^1\nh,\dots,\jt\hh\xi^r$ and 
$\,\jt\eta^1\nh,\dots,\jt\eta^r$ of vector fields are both linearly
independent, $\,\Omega\,$ is, by Remark~\ref{hodge}, a nonzero functional
multiple of the or\-thog\-onal projection onto the or\-thog\-onal complement
of $\,\vz\,$ and, applied
to the Lie brackets $\,[\jt\hh\xi,\jt\eta]$, tests the 
in\-te\-gra\-bi\-li\-ty of $\,\vz\nh$.

\section{Twice-co\-var\-i\-ant symmetric tensors}\label{tc}
\setcounter{equation}{0}
The ten\-so\-ri\-al\-i\-ty of $\,N'$ in (\ref{nnh}\hs-a) was established in
(\ref{bcm}). For 
$\,N''\nh$, since
\begin{equation}\label{lfv}
[\hn\Lie\hskip-1.2pt_{\phi v}\w g](w,u)
=\phi\hh[\Lie\hskip-1.2pt_v\w g](w,u)+(d_w\w\phi)g(v,u)+(d_u\w\phi)g(w,v)
\end{equation}
for any function $\,\phi\,$ on the given manifold $\,M\nh$, the resulting 
non\-ten\-so\-ri\-al contribution to (\ref{nnh}\hs-b) equals the sum 
$\,(d_w\w\phi)g(u,\,\cdot\,)+(d_u\w\phi)g(w,\,\cdot\,)\,$ of the last two
terms in (\ref{lfv}). Its exterior product
with $\,g(v_1\w,\,\cdot\,)\wedge\ldots\wedge\hs g(v_r\w,\,\cdot\,)$ vanishes, 
being a sum of $\,(r+1)$-fold exterior products of sections of a rank
$\,r\hs$ sub\-bundle of $\,T^*\hskip-1.8ptM\nh$, namely, the image of the
mor\-phism sending each vector field $\,v\,$ to $\,g(v,\,\cdot\,)$.
\begin{proof}[Proof of Theorem~\ref{twoni}]We derive our conclusion from
(\ref{nas}), by showing that the vanishing of $\,N'$ (or 
$\,N''$), is equivalent to the in\-te\-gra\-bi\-li\-ty of the distribution 
$\,\vz\nh=\mathrm{Ker}\,g\,$ (or, respectively, to
pro\-ject\-a\-bil\-i\-ty of $\,g\,$ along $\,\vz$).

The first of these claims is obvious from (\ref{ine}) and (\ref{bcm}). It thus
obviously suffices to show that {\it the second equivalence holds if\/
$\,\vz\,$ is in\-te\-gra\-ble.}

Clearly, with $\,\vz\nh=\mathrm{Ker}\,g\,$ from now on 
assumed in\-te\-gra\-ble,
\begin{equation}\label{wpr}
\begin{array}{l}
N''\nnh=\hs0\,\mathrm{\ if\ and\ only\ if\ 
}\,N''\nh(w,u,v_1\w,\dots,v_r\w)\hs=\hs0\,\mathrm{\ for\ all}\\
\mathrm{local\ vector\ fields\ }\,w,u,v_1\w,\dots,v_r\w\mathrm{\
pro\-ject\-a\-ble\ along\ }\,\vz\nh.
\end{array}
\end{equation}
Although $\,[\hn\Lie\nh g](w,u)\,$ in (\ref{nnh}\hs-b) is not a genuine
$\,1$-form on the manifold $\,M\,$ in question, we now artificially turn it
into one, by fixing a local trivialization of $\,T\nh M\nh$, containing 
a local trivialization of $\,\vz\nh$, and declaring $\,[\hn\Lie\nh g](w,u)\,$
to be $\,1$-form acting by $\,v\mapsto[\hn\Lie\hskip-1.2pt_v\w g](w,u)\,$ on
our selected (finitely many) vector fields $\,v\,$ trivializing
$\,T\nh M\nh$. 
As $\,[\hn\Lie\hskip-1.2pt_v\w g](w,u)=d_v[g(w,u)]-g([v,w],u)-g(w,[v,u])$,
pro\-ject\-a\-bil\-i\-ty of $\,w,u\,$ and (\ref{prj}) imply that 
\begin{equation}\label{lvg}
[\hn\Lie\hskip-1.2pt_v\w g](w,u)=d_v[g(w,u)]\,\mathrm{\ whenever\
}\,v\,\mathrm{\  is\ a\ section\ of\ }\,\vz\nh=\mathrm{Ker}\,g\hh.
\end{equation}
If $\,N''\nnh=\hs0$, the
$\,(r+1)$-form $\,\zeta=N''\nh(w,u,v_1\w,\dots,v_r\w)\,$
vanishes, and hence so does $\,d_v[g(w,u)]\,$ in (\ref{lvg}), for sections
$\,v\,$ of $\,\vz\nh$, as
$\,\zeta(v,\,\cdot\,,\dots,\,\cdot\,)\,$ equals $\,d_v[g(w,u)]\,$ times the
the exterior product 
$\,g(v_1\w,\,\cdot\,)\wedge\ldots\wedge g(v_r\w,\,\cdot\,)\,$ (and the latter 
$\,r$-form
may be chosen nonzero since $\,\mathrm{rank}\,g=r$). Thus, 
by (\ref{prt}), $\,g\,$ is pro\-ject\-a\-ble along $\,\vz$.

Conversely, let us assume pro\-ject\-a\-bil\-i\-ty of $\,g\,$ along
$\,\vz\nh$. Now in (\ref{wpr}) -- (\ref{lvg}) $\,d_v[g(w,u)]=0$, and hence
$\,[\hn\Lie\hskip-1.2pt_v\w g](w,u)=0\,$ for all sections $\,v\,$ of 
$\,\vz\nh$. The $\,1$-form $\,[\hn\Lie\nh g](w,u)\,$
vanishes on $\,\vz\nh=\mathrm{Ker}\,g$, and so obviously do 
$\,g(v_1\w,\,\cdot\,),\dots,g(v_r\w,\,\cdot\,)\,$ in (\ref{wpr}).
As $\,\mathrm{rank}\,g=r$, the $\,1$-forms vanishing on $\,\vz\,$ constitute 
a vector sub\-bun\-dle of fibre dimension $\,r\,$ in
$\,T^*\hskip-1.8ptM\nh$. Thus, $\,N''\nnh=\hs0\,$ by (\ref{wpr}).
\end{proof}

\ \

%\noindent{\bf Competing interests:} The authors declare none.

\ \

\end{document}